\newtheorem{thm}{Theorem}[section]
\newtheorem{lem}[thm]{Lemma}
\newtheorem{rem}[thm]{Remark}
\theoremstyle{definition}
\newtheorem{defn}[thm]{Definition}
\numberwithin{equation}{section}
\newcommand{\be}{\begin{equation}}
\newcommand{\ee}{\end{equation}}
\newcommand{\R}{\mathbb{R}}
\newcommand{\N}{\mathbb{N}}
\newcommand{\ur}[1]{\mathrm{#1}}
\newcommand{\ure}{\ur e}
\newcommand{\eps}{\varepsilon}
\newcommand{\gt}{>}
\newcommand{\lt}{<}
\DeclareMathOperator{\supp}{supp}
\newcommand{\defs}{\coloneqq}
\newcommand{\sfed}{\eqqcolon}
\newcommand{\ra}{\rightarrow}
\newcommand{\nea}{\nearrow}
\newcommand{\sea}{\searrow}
\newcommand{\ol}{\overline}
\newcommand{\loc}{\mathrm{loc}}
\newcommand{\embed}{\hookrightarrow}
\newcommand{\hp}{\hphantom}
\newcommand{\pe}{\mathrel{\hp{=}}}
\newcommand{\tmaxe}{T_{\max, \eps}}
\newcommand{\intom}{\int_\Omega}
\newcommand{\Ombar}{\ol \Omega}
\newcommand{\leb}[2][\spaceused]{\ensuremath{L^{#2}(#1)}}
\newcommand{\sob}[3][\spaceused]{\ensuremath{W^{#2, #3}(#1)}}
\newcommand{\con}[2][\ol \spaceused]{\ensuremath{C^{#2}(#1)}}
\newcommand{\dx}{\,\mathrm{d}x}
\newcommand{\ds}{\,\mathrm{d}s}
\newcommand{\dt}{\,\mathrm{d}t}
\newcommand{\ddt}{\frac{\mathrm{d}}{\mathrm{d}t}}
\newcommand{\ue}{u_\eps}
\newcommand{\uep}{u_{\eps^\prime}}
\newcommand{\ve}{v_\eps}
\newcommand{\vep}{v_{\eps^\prime}}
\newcommand{\qsup}{q_{\sup}}
\newcommand{\rsup}{r_{\sup}}
\renewenvironment{proof}[1][\proofname]{%
  \parskip=0pt \par
  \pushQED{\qed}%
  \normalfont \topsep0\p@\relax
  \trivlist
  \item[\hskip\labelsep\scshape
  #1\@addpunct{.}]\ignorespaces
}{%
  \popQED\endtrivlist\@endpefalse
}
\title
{When do Keller--Segel systems with heterogeneous logistic sources admit generalized solutions?}
\author[1,2]{Jianlu Yan\footnote{e-mail:\ 230159430@seu.edu.cn}}
\author[2]{Mario Fuest\footnote{e-mail:\ fuestm@math.uni-paderborn.de}}
\affil[1]{Institute for Applied Mathematics, School of Mathematics, Southeast University, Nanjing 211189, P.~R.~China}
\affil[2]{Institut für Mathematik, Universität Paderborn, Warburger Str.~100, 33098 Paderborn, Germany}  
\begin{document}
\date{}
\maketitle

\KOMAoptions{abstract=true}
\begin{abstract}
  \noindent
  We construct global generalized solutions to the chemotaxis system
  \[
    \begin{cases}
      u_t = \Delta u - \nabla \cdot (u \nabla v) + \lambda(x) u - \mu(x) u^\kappa, \\
      v_t = \Delta v - v + u
    \end{cases}
  \]
  in smooth, bounded domains $\Omega \subset \mathbb R^n$, $n \geq 2$,
  for certain choices of $\lambda, \mu$ and $\kappa$. \\[0.5pt]
  Here, inter alia, the selections $\mu(x) = |x|^\alpha$ with $\alpha < 2$ and $\kappa = 2$
  as well as $\mu \equiv \mu_1 > 0$ and $\kappa > \min\{\frac{2n-2}{n}, \frac{2n+4}{n+4}\}$
  are admissible (in both cases for any sufficiently smooth $\lambda$). \\[0.5pt]
  While the former case appears to be novel in general, in the two- and three-dimensional setting,
  the latter improves on a recent result by Winkler (Adv.\ Nonlinear Anal.\ \textbf{9} (2019), no.\ 1, 526–566),
  where the condition $\kappa > \frac{2n+4}{n+4}$ has been imposed.
  In particular, for $n = 2$, our result shows that taking \emph{any} $\kappa > 1$
  suffices to exclude the possibility of collapse into a persistent Dirac distribution.\\[0.5pt]
  \textbf{Key words:} {chemotaxis; logistic source; generalized solution; heterogeneous environment}\\
  \textbf{MSC (2020):} {35K55 (primary); 35D99, 35Q92, 92C17 (secondary)}
\end{abstract}

\section{Introduction}
After the seminal work by Keller and Segel \cite{KellerSegelTravelingBandsChemotactic1971} nearly half a century ago,
biologists and mathematicians alike have shown great interest
in various systems describing chemotaxis,
i.e.\ the partially directed movement of (inter alia) cells towards higher concentration of a chemical substance,
see for instance \cite{BellomoEtAlMathematicalTheoryKeller2015} for an overview.

As discussed in the recent survey \cite{LankeitWinklerFacingLowRegularity2019},
many taxis systems lack sufficient regularity to obtain global classical solutions and hence often one has to resort to certain weaker solution concepts.
In the present paper, we will construct global generalized solutions of the initial boundary value problem
\begin{eqnarray}\label{e11}
\begin{cases}
\begin{array}{lll}
\medskip
u_t =\Delta u- \nabla\cdot(u\nabla v)+\lambda(x) u - \mu(x) u^\kappa,&x\in\Omega,\ t>0,\\
\medskip
v_t =\Delta v -v+u,&x\in\Omega,\ t>0,\\
\medskip
\partial_\nu u = 0,\ \partial_\nu v = 0,&x\in\partial\Omega,\ t>0,\\
\medskip
u(x,0)=u_0(x),\ v(x,0)=v_0(x),&x\in\Omega,
\end{array}
\end{cases}
\end{eqnarray}
where $\Omega\subset \mathbb{R}^n$, $n\geq1$, is a bounded domain with smooth boundary,
$\kappa \gt 1$ is a parameter
and $\lambda$, $\mu$ as well as $u_0, v_0$ are sufficiently regular given functions.
Here, $u$ denotes the density of the cells and $v$ represents the concentration of the chemical signal.
Both undergo random motion (terms $\Delta u$ and $\Delta v$),
the cells undergo logistic-type growth $(+ \lambda(x) u - \mu(x) u^\kappa$),
the chemical signal is produced by the cells ($+ u$) and decays exponentially ($-v$)
and, most importantly, the cells are attracted by a higher chemical concentration
($-\nabla \cdot (u \nabla v)$, the so-called chemotaxis term).

The system~\eqref{e11} forms also the basis of more complex models describing
population dynamics~\cite{HillenPainterUserGuidePDE2009, ShigesadaEtAlSpatialSegregationInteracting1979},
pattern formation~\cite{WoodwardEtAlSpatiotemporalPatternsGenerated1995} or
cancer invasion processes~\cite{ChaplainLolasMathematicalModellingCancer2005}, to just name a few examples.
A recent overview on several chemotaxis systems, many of which feature a logistic source term,
is given in~\cite{PainterMathematicalModelsChemotaxis2019}.

Lately, also space-depending functions $\lambda$, $\mu$, reflecting heterogeneous environments,
have been considered in parabolic--elliptic versions of~\eqref{e11}
both for $\Omega = \R^n$~\cite{SalakoShenParabolicellipticChemotaxisModel2018, SalakoShenParabolicellipticChemotaxisModel2018a, SalakoShenParabolicellipticChemotaxisModel2018b}
and for bounded domains~\cite{FuestFinitetimeBlowupTwodimensional2020}.
We refer especially to the introduction of the latter article for a more detailed motivation.

Moreover, for the effect of nonlinear degradation compared to additional modifications,
such as nonlinear chemotactic sensitivity or nonlinear signal production,
we refer to the recent work~\cite{LiFullyParabolicChemotaxis2019} (and the references therein),
where these amendments have been studied together.

Let us now briefly summarize the findings on global existence regarding the system~\eqref{e11}.
For constant $\lambda, \mu \gt 0$ and $\kappa = 2$,
global classical solutions to \eqref{e11} are known to exist
if either $n = 2$ and $\mu$ is merely assumed to be positive \cite{OsakiEtAlExponentialAttractorChemotaxisgrowth2002}
or if $n \ge 3$ and $\mu$ is sufficiently large \cite{WinklerBoundednessHigherdimensionalParabolicparabolic2010}.
These results already show a certain relaxing effect of quadratic degradation terms.
After all, in the absence of cell proliferation, that is, for $\lambda \equiv 0$ and $\mu \equiv 0$,
solutions blowing up in finite time are known to exist
both in two~\cite{HorstmannWangBlowupChemotaxisModel2001, SenbaSuzukiParabolicSystemChemotaxis2001}
and higher~\cite{WinklerFinitetimeBlowupHigherdimensional2013} dimensions.

However, not every superlinear dampening term guarantees the existence of global classical solutions.
In certain parabolic--elliptic simplifications of \eqref{e11},
for constant $\lambda, \mu \gt 0$ and sufficiently small $\kappa \gt 1$,
solutions blowing up in finite time have been constructed,
at first only for $n \ge 5$~\cite{WinklerBlowupHigherdimensionalChemotaxis2011}
and then also for $n \ge 3$~\cite{WinklerFinitetimeBlowupLowdimensional2018}.
Corresponding results have recently also been obtained for heterogeneous environments
in two-~\cite{FuestFinitetimeBlowupTwodimensional2020}
and higher~\cite{BlackEtAlRelaxedParameterConditions2020} dimensional settings.
Still, none of these works claim that the conditions on ($\mu$ and) $\kappa$ are optimal.
In fact, for quite a large range of parameters,
it still appears to be unknown whether global classical solutions exist for all suitably smooth initial data.

Thus, in certain cases, one might need to resort to more general solution concepts.
The most prominent result in this direction is probably~\cite{LankeitEventualSmoothnessAsymptotics2015},
where weak solutions have been constructed for constant $\lambda, \mu \gt 0$ and $\kappa = 2$.
In the same work, it is also shown that in three dimensional convex domains and if $\lambda$ is sufficiently small compared to $\mu$,
these solutions even become smooth after some time.

More recently, again for constant $\lambda, \mu \gt 0$ and $\kappa = 2$,
both for~\eqref{e11} (in the two dimensional setting)~\cite{LankeitImmediateSmoothingGlobal2020}
and for parabolic--elliptic versions thereof~\cite{WinklerHowStrongSingularities2019},
it has been analyzed for which initial data one can construct solutions becoming smooth instantaneously.

Moreover, for constant $\lambda, \mu \gt 0$
the question how large $\kappa$ needs to be for~\eqref{e11} to admit at least global generalized solutions has also been asked.
A first partial answer was already given in 2008:
For a parabolic--elliptic simplification of~\eqref{e11},
the condition $\kappa \gt 2-\frac1n$ suffices~\cite{WinklerChemotaxisLogisticSource2008}.
Meanwhile it it known that the same results also holds also in the fully parabolic case~\cite{ViglialoroVeryWeakGlobal2016}.
Recently, this condition has been improved to $\kappa \gt \frac{2n+4}{n+4}$~\cite{WinklerRoleSuperlinearDamping2019}.

While the solution concepts in the articles above differ,
they have in common that they exclude the collapse into a persistent Dirac-type distribution.
The latter has been observed for (a simplified version of) the system with no proliferation $(\lambda \equiv 0$, $\mu \equiv 0)$
in the two-dimensional setting~\cite{BilerRadiallySymmetricSolutions2008}
and is certainly one of the most striking features of chemotaxis systems.

\paragraph{Main results.}
In the present paper, we substantially extend the set of superlinear degrading terms
which are known to allow for generalized solutions
excluding the possibility of collapse into persistent Dirac-type distributions.
Our purpose is two-fold:
On the one hand, we are interested in the general interplay between the space-dependent function $\mu$,
which might vanish at some points,
and the superlinear degrading term $-u^\kappa$, $\kappa \gt 1$.
On the other hand, for constant $\mu$, we improve on the conditions imposed on $\kappa$
in~\cite{ViglialoroVeryWeakGlobal2016} and~\cite{WinklerRoleSuperlinearDamping2019}.

Our main result is
\begin{thm}\label{main_result}
  Let $\Omega\subset\mathbb{R}^n$, $n \ge 2$, be a bounded domain with smooth boundary, $\beta \in (0, 1)$, $s \gt 0$ and
  \begin{align}\label{cond:kappa}
        \kappa
    \gt \max\left\{
          \frac{2n(s+1)}{(n+2) s},
          \min\left\{
            \frac{2(n-1) s + n}{n s}, 
            \frac{2(n+2) s + 2n}{(n+4) s}
          \right\}
        \right\}.
  \end{align}
  Suppose moreover that $\lambda, \mu \in \con\beta$ fulfill $\mu \ge 0$ and
  \begin{align}\label{cond:mu}
    \intom \mu^{-s} \lt \infty.
  \end{align}

  Then for all
  \begin{equation}\label{e12}
  0 \le u_0\in \leb1
  \quad \text{and} \quad
  0 \le v_0\in \leb\infty,
  \end{equation}
  the system~\eqref{e11} possesses at least one global generalized solution in the sense of Definition~\ref{defi21} below.
\end{thm}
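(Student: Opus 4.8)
The plan is to obtain $(u,v)$ as a limit of solutions to regularized problems. For $\eps\in(0,1)$ I would study the system arising from~\eqref{e11} by replacing the cross-diffusive flux $u\nabla v$ with the saturated flux $\tfrac{\ue\nabla\ve}{1+\eps\ue}$ (and, should this not already suffice for global solvability of the approximants, by inserting a further absorption term $-\eps\ue^{\kappa_0}$ with $\kappa_0$ large), and by smoothing the data so that $0\le u_{0\eps}\in\con\infty$ with $u_{0\eps}\to u_0$ in $\leb1$ and $0\le v_{0\eps}\in\con\infty$ with $v_{0\eps}\to v_0$ in a suitable sense. Standard parabolic theory then yields global classical solutions $(\ue,\ve)$, and the task becomes to derive $\eps$-independent estimates strong enough to pass to the limit $\eps\searrow0$.

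The foundational bounds come from integrating the first equation in space. Writing $y_\eps(t)\defs\intom\ue(\cdot,t)$ and noting that~\eqref{cond:mu} together with the fact that $n\ge2$ forces $\frac{1}{\kappa-1}\lt s$ — so that $\intom\mu^{-1/(\kappa-1)}\lt\infty$ — Hölder's inequality gives $\intom\ue\le C\big(\intom\mu\ue^\kappa\big)^{1/\kappa}$, whence $y_\eps'\le\|\lambda\|_{\leb\infty}\,y_\eps-Cy_\eps^\kappa$ and, since $\kappa\gt1$, both a uniform bound $\sup_{t\gt0}\intom\ue(\cdot,t)\le C$ and $\int_0^T\intom\mu\ue^\kappa\le C(T)$ for every $T\gt0$. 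A second application of Hölder's inequality, again exploiting~\eqref{cond:mu}, upgrades the latter to a space–time bound $\int_0^T\intom\ue^{p_0}\le C(T)$ with $p_0=\frac{\kappa s}{s+1}$; the first entry of the maximum in~\eqref{cond:kappa} is precisely the demand $p_0\gt\frac{2n}{n+2}$, which is what makes the chemotactic integral $\int_0^T\intom\ue|\nabla\ve|$ tractable at all. Feeding this into smoothing estimates for the heat semigroup applied to the second equation yields $\eps$-independent bounds for $\nabla\ve$ in some $L^{r_0}(\Omega\times(0,T))$ and for $\ve$ in further Lebesgue spaces.

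The crucial and most delicate step is one further $\eps$-independent estimate on $\nabla\ue$, strong enough to enforce \emph{strong} precompactness of $(\ue)$ in $L^1(\Omega\times(0,T))$; mere weak compactness is insufficient because the dampening term will only converge weakly. The structure of~\eqref{cond:kappa} reveals that it is enough for one of two routes to succeed, whence the minimum. On the one hand — and this appears to be the genuinely new ingredient — testing the first equation with $-(\ue+1)^{-\alpha}$ for a suitable $\alpha\in(0,1)$ and absorbing the arising chemotactic contribution by means of the bound on $\nabla\ve$ controls $\int_0^T\intom|\nabla\ue|^2(\ue+1)^{-\alpha-1}$, which together with the $L^{p_0}$-bound and the Gagliardo--Nirenberg inequality improves the integrability of $\ue$, provided $\kappa\gt\frac{2(n-1)s+n}{ns}$. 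On the other hand, exploiting instead the regularizing action of the $v$-equation — testing the first equation against powers of $\ue$ coupled with an energy-type bound for $\intom|\nabla\ve|^2$, in the spirit of~\cite{ViglialoroVeryWeakGlobal2016,WinklerRoleSuperlinearDamping2019} — achieves the same under $\kappa\gt\frac{2(n+2)s+2n}{(n+4)s}$. In either case, combining the new bound with the estimate $\uet\in L^1(0,T;(\sob{k}{2})^\ast)$ for $k$ large (read off directly from the equation) and an Aubin--Lions--Simon argument produces $\ue\to u$ in $L^1(\Omega\times(0,T))$ and a.e., while $\nabla\ve\rh\nabla v$ in $L^{r_0}$, $\ve\to v$ a.e., and $\frac{\ue}{1+\eps\ue}\to u$.

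It then remains to pass to the limit and to verify Definition~\ref{defi21}. The above convergences allow one to pass to the limit in the linear weak formulation of the $v$-equation. For the $u$-equation, equality cannot be expected, since $\mu\ue^\kappa$ converges only weakly-$\ast$, to a measure dominating $\mu u^\kappa$; instead one tests a concave transformation of the first equation — for instance the one generated by $\ue\mapsto\ln(\ue+1)$, which turns the unfavourable sign of the dampening term into a favourable one — against nonnegative test functions and invokes weak lower semicontinuity for the remaining terms, thereby obtaining the required supersolution inequality. The uniform bound $\sup_{t\gt0}\intom\ue\le C$ moreover passes to the limit, showing $u\in L^\infty_{\loc}([0,\infty);\leb1)$; in particular $u$ is a genuine locally integrable function rather than a measure carrying atoms, which is exactly the property excluding a collapse into a persistent Dirac distribution. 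Assembling these ingredients shows that $(u,v)$ is a global generalized solution in the sense of Definition~\ref{defi21}; as anticipated, the main obstacle is the additional gradient estimate of the third step, where condition~\eqref{cond:kappa} enters.
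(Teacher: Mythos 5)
Your overall architecture — regularize, derive $\eps$-uniform bounds beginning with mass conservation and the weighted $L^\kappa_{\loc}$-$L^{p_0}$ estimate with $p_0 = \kappa s/(s+1)$, then pass to the limit in a supersolution formulation obtained via the concave transformation $u \mapsto \ln(u+1)$ — does agree with the paper's proof in outline. You also correctly read the first term in the maximum in~\eqref{cond:kappa} as $p_0 > \frac{2n}{n+2}$. But two points deviate from what the paper actually establishes, and the second is a genuine gap.

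First, you locate the source of the minimum in~\eqref{cond:kappa} in two alternative routes to a gradient estimate on $\ue$ (testing with $-(\ue+1)^{-\alpha}$, $\alpha\in(0,1)$, versus testing with powers of $\ue$). That is not where the dichotomy sits in the paper: the two entries of the minimum correspond to two ways of obtaining a uniform $L^r(\Omega)$ bound for $\ve$ by feeding either the $L^\infty_{\loc}$-$L^1$ bound or the $L^\kappa_{\loc}$-$L^{p_0}$ bound for $\ue$ into Neumann heat semigroup estimates (Lemma~\ref{lemv}); which one yields the larger admissible $r$ depends on $n$ and $\kappa$, and the crossover at $n=4$ is exactly why $\frac{2n-2}{n}$ wins for $n\le 3$. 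The gradient-type estimate on $\ue$ in the paper is then derived only one way, by testing with $\frac{1}{\ue+1}$ (i.e., $\alpha=1$) and absorbing the cross term via $\|\nabla\ve\|_{L^2_{\loc}}$, which in turn comes from an energy identity for $\ve$ combined with the $L^r$ bound and Gagliardo--Nirenberg (Lemmas~\ref{lem:params} and~\ref{lem2.3}). Your variant with $\alpha<1$ would require controlling $\int(\ue+1)^{1-\alpha}|\nabla\ve|^2$, which your a priori bounds do not give, so it is not clear it closes; note also that Definition~\ref{defi21} asks precisely for $\nabla\ln(u+1)\in L^2_{\loc}$, i.e.\ the $\alpha=1$ quantity.

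Second, and this is the real gap: to pass to the limit in the supersolution inequality you must handle $\int_0^\infty\intom\frac{f_\eps(\ue)}{\ue+1}\big(\nabla\ln(\ue+1)\cdot\nabla\ve\big)\varphi$, a product in which, by your own account, \emph{both} $\nabla\ln(\ue+1)$ and $\nabla\ve$ converge only weakly. Weak times weak does not pass to the limit, and this term carries no sign that lower semicontinuity could rescue. The paper resolves this by proving \emph{strong} $L^2_{\loc}$ convergence of $\nabla\ve$ (Lemma~\ref{lem2.9}), testing the equation for $\ve-\vep$ against $\ve-\vep$ and controlling the resulting $\int_0^T\intom(\ue-\uep)(\ve-\vep)$ via the dual pair of bounds $\ue\in L^\kappa_{\loc}L^{p_0}$ and $\ve\in L^{\kappa'}_{\loc}L^{p_0'}$ that Lemma~\ref{lem2.3} is built to supply. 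Your sketch contains neither the $L^{\kappa'}_{\loc}L^{p_0'}$ bound on $\ve$ nor any substitute for this strong-compactness step, so the final limit passage does not go through as written.
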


\begin{rem}
  The restriction $n \ge 2$ in Theorem~\ref{main_result} is not needed.
  After all, even without any degrading term (i.e.\ $\mu \equiv 0$),
  for $n = 1$ global classical solutions exist~\cite{OsakiYagiFiniteDimensionalAttractor2001}.
  However, the condition $n \ge 2$ allows for a briefer reasoning in some places.
\end{rem}

\begin{rem}\label{rm:proto}
  Let $\Omega \subset \R^n$, $n \ge 2$, be a smooth, bounded domain containing $0$
  and take as a prototypical example $\mu(x) = \mu_1 |x|^\alpha$, $x \in \Ombar$, for $\mu_1 \gt 0$ and $\alpha \ge 0$.
  Then~\eqref{cond:mu} is fulfilled for all $s \in (0, \frac{n}{\alpha})$
  and if
  \begin{align}\label{cond:kappa_proto}
        \kappa
    \gt \max\left\{
          \frac{2n + 2 \alpha}{n+2},
          \min\left\{
            \frac{2(n-1) + \alpha}{n}, 
            \frac{2(n+2) + 2 \alpha}{n+4}
          \right\}
        \right\}
  \end{align}
  or, equivalently, ($\kappa \gt \min\left\{ \frac{2n-2}{n}, \frac{2n+4}{n+4} \right\}$ and)
  \begin{align}\label{cond:mu_proto}
        \alpha
    \lt \min\left\{
          \frac{(\kappa - 2)n + 2\kappa}{2},
          \max\left\{
            (\kappa - 2)n + 2,
            \frac{(\kappa - 2)n + 4 \kappa - 4}{2}
          \right\}
        \right\}
  \end{align}
  holds, then \eqref{cond:kappa} is also satisfied for some sufficiently large $s \in (0, \frac{n}{\alpha})$.
  Thus, in this case Theorem~\ref{main_result} provides the existence of global generalized solutions.

  If $\kappa = 2$, then \eqref{cond:mu_proto} reduces to the requirement $\alpha \lt 2$.
  Furthermore, thanks to the assumption that $n \ge 2$, for $\alpha = 0$ the condition \eqref{cond:kappa_proto} becomes
  \begin{align}\label{cond:kappa_alpha=0}
        \kappa
    \gt \min\left\{ \frac{2n-2}{n}, \frac{2n+4}{n+4} \right\}.
  \end{align}
  Obviously, this improves on the condition $\kappa \gt \frac{2n-1}{n}$ taken in~\cite{ViglialoroVeryWeakGlobal2016}.
  (Note that, admittedly, the solution concept taken there while similar to ours is slightly stronger.)
  Moreover, as $\frac{2n-2}{n} \lt \frac{2n+4}{n+4}$ if and only if $n \lt 4$,
  for the physically relevant space dimensions $n = 2$ and $n = 3$,
  \eqref{cond:kappa_alpha=0} is a weaker assumption than $\kappa \gt \frac{2n+4}{n+4}$
  which has been imposed in~\cite{WinklerRoleSuperlinearDamping2019}.
\end{rem}

\begin{rem}
  In the prototypical example in Remark~\ref{rm:proto}, the function $\mu$ has (at most) one zero
  and, with obvious modifications, similar results hold when $\mu$ is allowed to have a finite number of roots.
  However, we would like to mention that Theorem~\ref{main_result} is also applicable
  for certain $\mu$ vanishing on some null sets with infinitely many points,
  for instance on lower dimensional manifolds such as line segments or circles.
\end{rem}

\paragraph{Main ideas.}
Our definition of generalized solutions follows~\cite{WinklerLargeDataGlobalGeneralized2015};
that is, $v$ is required to be a weak solution and $u$ has to be both a `mass subsolution' and a `logarithmic supersolution'.
This concept, which is consistent with that of classical solutions, is introduced in more detail in Section~\ref{sec:sol_concept}.

The proof then mainly consists in obtaining sufficiently strong a priori estimates
for solutions to certain approximative systems (see~\eqref{e210} below)
which allow for the application of various compactness theorems.

As a first step, we make use of the logistic term in the first equation in~\eqref{e11} and condition~\eqref{cond:mu}
to obtain an $L_{\loc}^\kappa$-$L^p$ bound for $u$ for a certain $p \gt 1$ in Lemma~\ref{lm:u_l_kappa_l_p},
which in turn directly implies boundedness in $L_{\loc}^p(\Ombar \times [0, \infty))$.
This latter information will turn out to be crucial to pass to the limit in the second equation.

Next, relying on parabolic regularity theory, we want to derive a uniform $L^r$ bound for $v$.
To that end, we have (at least) two possibilities:
We could either make use of the local-in-time mass boundedness
(which is readily obtained upon integrating the first equation in~\eqref{e11}, see Lemma~\ref{lem2.2})
or of the aforementioned space-time bound for $u$.
As it turns out, both options have their merit---it depends on the choice of parameters which one is to be preferred.
In fact, if $\lambda, \mu \gt 0$ are constant and $n \in \{2, 3\}$, the former method turns out to be stronger.
This is the reason why we are able to improve on the corresponding result in~\cite{WinklerRoleSuperlinearDamping2019},
where only the latter method has been employed---which in turn is more powerful for (constant $\lambda, \mu \gt 0$ and) $n \gt 4$
and equally strong for $n = 4$.
Both these options are explored together in Lemma~\ref{lemv}.

These estimates combined then imply, precisely due to the condition~\eqref{cond:mu},
bounds of $\nabla v$ in $L_{\loc}^2(\Ombar \times [0, \infty))$
and of $uv$ in $L_{\loc}^\gamma(\Ombar \times [0, \infty))$ for some $\gamma \gt 1$, see Lemma~\ref{lem2.3}.
While the former is crucial for estimating $\ddt \intom \ln (u + 1)$,
and hence for obtaining a bound for $\int_0^T \intom \frac{|\nabla u|^2}{(u+1)^2}$ (cf.\ Lemma~\ref{lem2.4}),
the latter allows us to make use of an energy identity associated with the second equation in~\eqref{e11}
to obtain even strong $L^2$~convergence of $\nabla v$ in Lemma~\ref{lem2.9}.

Finally, at the end of Section~\ref{sec:proof}, we combine the information gathered and prove Theorem~\ref{main_result}.

\section{A generalized solution concept and approximate solutions}\label{sec:sol_concept}
Throughout the sequel, we fix $n \ge 2$, a smooth, bounded domain $\Omega \subset \R^n$,
$s \gt 0$, $\kappa$ satisfying \eqref{cond:kappa},
$\beta \in (0, 1)$, $\lambda, \mu \in \con\beta$ with $\mu \ge 0$ fulfilling \eqref{cond:mu}
and $u_0, v_0$ satisfying~\eqref{e12}.
Additionally, for $c \gt 0$, we always set $\frac{c}{0} \defs \infty$ and $\frac{c}{\infty} \defs 0$.

\begin{defn}\label{defi21}
A pair
\begin{equation}\label{e21}
  (u, v) \in L^1_{\loc}(\Ombar\times[0,\infty)) \times L^2_{\loc}([0,\infty);W^{1,2}(\Omega))
\end{equation}
with
\begin{equation}\label{e22}
 u \ge 0, v \ge 0 \text{ a.e.~in } \Omega \times (0,\infty)
\end{equation}
as well as
\begin{equation}\label{e23}
\nabla \ln(u+1)\in L^2_{\loc}(\Ombar\times[0,\infty))\end{equation}
is called a \emph{global generalized solution }of $(\ref{e11})$ if $u$ has the property
\begin{equation}\label{e24}
  \int_{\Omega} u(\cdot, T)-\int_{\Omega} u_{0}
  \leq\int_{0}^{T} \int_{\Omega} \lambda u
  -\int_{0}^{T} \int_{\Omega} \mu u^\kappa
\end{equation}
for a.e.\ $T>0$,
\begin{equation}\label{e25}
\begin{split}
&\pe-\int_{0}^\infty\int_{\Omega} \ln (u+1) \varphi_{t}-\int_{\Omega}\ln \left(u_{0}+1\right) \varphi(\cdot, 0) \\ 
&\geq \int_{0}^{\infty} \int_{\Omega}|\nabla \ln (u+1)|^{2} \varphi-\int_{0}^{\infty} \int_{\Omega} \nabla \ln (u+1) \cdot \nabla \varphi \\ 
&\pe-\int_{0}^{\infty} \int_{\Omega} \frac{u}{u+1}(\nabla \ln (u+1) \cdot \nabla v) \varphi+\int_{0}^{\infty} \int_{\Omega} \frac{u}{u+1} \nabla v \cdot \nabla \varphi\\
&\pe+\int_{0}^{\infty} \int_{\Omega} \frac{\lambda u}{u+1}\varphi-\int_{0}^{\infty} \int_{\Omega} \frac{\mu u^\kappa}{u+1}\varphi
\end{split}
\end{equation}
holds for all nonnegative $\varphi\in C_{c}^{\infty} (\Ombar\times [0,\infty))$
and
\begin{equation}\label{e26}
-\int_{0}^{\infty}\int_{\Omega}v\varphi_t
-\int_{\Omega}v_0\varphi(\cdot,0)
=-\int_{0}^{\infty}\int_{\Omega}\nabla v\cdot\nabla\varphi
-\int_{0}^{\infty}\int_{\Omega}v\varphi
+\int_{0}^{\infty}\int_{\Omega}u\varphi
\end{equation}
holds for all $\varphi\in C_{c}^{\infty} (\Ombar\times [0,\infty))$.
\end{defn}

\begin{rem}
  That any global classical solution $(u, v)$ of~\eqref{e11}
  is also a global generalized solution (even with equality in~\eqref{e25}),
  can be seen by a direct computation.

  Moreover, the above definition is consistent with classical solutions.
  That is, if $(u, v) \in (C^0(\Ombar \times [0, \infty)) \cap C^{2, 1}(\Ombar \times (0, \infty)))^2$ is a global generalized solution,
  then $(u, v)$ is a also a (global) classical solution.
  We refer to \cite[Lemma~2.2]{HeihoffGeneralizedSolutionsSystem2019} and \cite[Lemma~2.1]{WinklerLargeDataGlobalGeneralized2015}
  for corresponding proofs in closely related settings.
\end{rem}

In order to construct such generalized solutions by an approximation procedure,
we henceforth fix families $(u_{\eps 0})_{\eps \in (0, 1)}, (v_{\eps 0})_{\eps \in (0, 1)} \subset \con\infty$
with
\begin{align}\label{eq:conv_init}
  0 \le u_{\eps 0} \ra u_0 \quad \text{in $\leb1$}
  \quad \text{and} \quad
  0 \le v_{\eps 0} \ra v_0 \quad \text{in $\leb r$ for all $r \in [1, \infty)$}
  \qquad \text{as $\eps \ra 0$}.
\end{align}
as well as
\begin{align}\label{eq:bdd_init}
  \|u_{\eps 0} - u_0\|_{\leb1} \le 1
  \quad \text{and} \quad
  \|v_{\eps 0} - v_0\|_{\leb1} \le 1
  \qquad \text{for all $\eps \in (0, 1)$}.
\end{align}
(That this is indeed possible can rapidly be seen by a typical convolution argument.)
  
Moreover, for all $\eps \in (0, 1)$, we also fix a nonnegative function $f_\eps \in C_c^\infty([0, \infty))$
with
\begin{align*}
  f_\eps(s)
  \begin{cases}
    = s,    & 0 \le s \le \frac1\eps, \\
    \le s,  & \frac1\eps \lt s \lt  \frac2\eps, \\
    = 0,    & \frac2\eps \le s.
  \end{cases}
\end{align*}
With these preparations at hand, we can construct global solutions to certain approximate problems:

\begin{lem}\label{lem2.1}
Let $\varepsilon \in(0,1)$. Then there exist nonnegative functions
\begin{equation*}
\left\{\begin{array}{l}{u_{\varepsilon} \in C^{0}(\Ombar \times[0, \infty)) \cap C^{2,1}(\Ombar \times(0, \infty))} 
\\ {v_{\varepsilon} \in \bigcap_{q>n} C^{0}\left([0, \infty) ; W^{1, q}(\Omega)\right) \cap C^{2,1}(\Ombar \times(0, \infty))}\end{array}\right.
\end{equation*}
such that $\left(u_{\varepsilon}, v_{\varepsilon}\right)$ is a global classical solution of
\begin{equation}\label{e210}
\begin{cases}
\begin{array}{lll}
\medskip
u_{\varepsilon t}=\Delta u_{\varepsilon}-\nabla\cdot(f_{\varepsilon}(u_{\varepsilon})\nabla v_{\varepsilon})+\lambda(x) u_{\varepsilon}-\mu(x) u_{\varepsilon}^\kappa, &x\in \Omega,\ t>0,\\
\medskip
v_{\varepsilon t}=\Delta v_{\varepsilon}-v_{\varepsilon}+u_{\varepsilon}, &x\in \Omega,\ t>0,\\
\medskip
\partial_\nu \ue = 0,\ \partial_\nu \ve = 0, &x\in \partial \Omega,\ t>0,\\
\medskip
u_{\varepsilon}(x,0)=u_{\eps 0}(x),\ v_{\varepsilon}(x,0)=v_{\eps 0}(x), &x\in \Omega.
\end{array}
\end{cases}
\end{equation}
\end{lem}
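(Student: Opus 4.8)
The plan is to follow the well-established scheme for regularized Keller--Segel type systems: construct a local solution by a fixed-point argument, continue it to a maximal one equipped with a convenient extensibility criterion, and then rule out finite-time blow-up by a short chain of a priori estimates. Throughout, $\eps \in (0,1)$ is fixed and no uniformity in $\eps$ is sought; the two structural facts that make everything work are that the truncated sensitivity $f_\eps$ is bounded and smooth and that the degradation term carries the sign $-\mu \ue^\kappa \le 0$, so that in every estimate below the superlinear absorption may simply be discarded.

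\emph{Local existence, regularity and extensibility.} Fixing some $q > n$ and working in $C^0([0,T_0]; \con0) \times C^0([0,T_0]; \sob1q)$, I would rewrite \eqref{e210} as a fixed-point equation via the Neumann heat semigroup and its standard smoothing estimates, and check that the associated map is a contraction on a suitable closed ball for small $T_0 = T_0(\eps)$; here one uses that $s \mapsto f_\eps(s)$ and $s \mapsto \lambda(x)s - \mu(x)s^\kappa$ are locally Lipschitz in $s \ge 0$ (for the latter thanks to $\kappa \ge 1$) and Hölder in $x$. This yields a maximal solution on some interval $[0,\tmaxe)$ with the property that either $\tmaxe = \infty$ or $\limsup_{t \nea \tmaxe}\big( \|\ue(\cdot,t)\|_{\leb\infty} + \|\ve(\cdot,t)\|_{\sob1q} \big) = \infty$; this part can also be taken almost verbatim from, e.g., \cite{WinklerBoundednessHigherdimensionalParabolicparabolic2010}. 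Nonnegativity of $\ue$ is a consequence of the maximum principle, since $f_\eps(0) = 0$ and the zero-order terms of the first equation vanish at $\ue = 0$, and nonnegativity of $\ve$ then follows from $\ue \ge 0$ and $v_{\eps 0} \ge 0$. Parabolic Schauder theory, applied first to the second and then to the first equation and bootstrapped in the standard way, upgrades the solution to $(\ue, \ve) \in (C^{2,1}(\Ombar \times (0,\tmaxe)))^2$ and yields the asserted extra regularity of $\ve$.

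\emph{Global existence.} Suppose $\tmaxe < \infty$ and fix $T \in (0,\tmaxe)$; it then suffices to bound both terms in the extensibility criterion on $(0,T)$. Integrating the first equation of \eqref{e210} over $\Omega$ and using $\mu \ue^\kappa \ge 0$ gives $\ddt \intom \ue \le \|\lambda\|_{\leb\infty} \intom \ue$, hence a bound for $\ue$ in $L^\infty((0,T); \leb1)$. The rest is a finite bootstrap: testing the first equation with $\ue^{p-1}$ and estimating the chemotactic contribution by means of $0 \le f_\eps(\ue) \le \tfrac{2}{\eps}$, Young's inequality (absorbing a multiple of $\intom \ue^{p-2}|\nabla\ue|^2$ into the diffusion term), Hölder's and the Gagliardo--Nirenberg inequality, while discarding $-\mu\ue^\kappa$, one improves a bound for $\ue$ in $L^\infty((0,T); \leb{p_k})$ to one in $L^\infty((0,T); \leb{p_{k+1}})$ with $p_{k+1} > p_k$, each step feeding, via standard parabolic regularity for the second equation, into improved bounds for $\nabla\ve$. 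After finitely many iterations one reaches $p_k > n$, so that $\ve$ is bounded in $L^\infty((0,T); \sob1q)$ for some $q > n$, and a Moser--Alikakos type iteration then yields $\ue \in L^\infty(\Omega \times (0,T))$. This contradicts the extensibility criterion, so $\tmaxe = \infty$, as claimed.

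\emph{Expected main obstacle.} The only point requiring care is the bootstrap in the last step, where the apparent circular dependence between bounds on $\ue$ (needed to control $\nabla\ve$) and on $\nabla\ve$ (needed to control the drift in the $\ue$-equation) has to be broken; for fixed $\eps$ this is entirely routine because $f_\eps(\ue)$ is then bounded and the superlinear sink only helps, so that a crude iteration started from the $L^1$ mass bound terminates after finitely many steps. Alternatively, one may simply invoke the local existence and extensibility results of \cite{WinklerBoundednessHigherdimensionalParabolicparabolic2010} and carry out only the estimates of the second step.
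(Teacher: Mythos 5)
Your argument for local existence, the extensibility criterion and nonnegativity matches the paper's, which simply cites \cite[Theorem~3.1]{HorstmannWinklerBoundednessVsBlowup2005} and the maximum principle for these points. For global existence, however, you take a genuinely different and substantially more laborious route than the paper. You rely only on the \emph{boundedness} of $f_\eps$ (using $0 \le f_\eps(\ue) \le 2/\eps$) and run a full $L^p$-bootstrap together with an Alikakos--Moser iteration, interleaved with parabolic regularity for $\ve$, to climb from the $L^1$ mass bound to an $L^\infty$ bound. The paper instead exploits the stronger structural feature that $f_\eps$ has \emph{compact support}: since $f_\eps \equiv 0$ on $[2/\eps, \infty)$, the spatially constant function
\[
  \ol u_\eps(x, t) \defs \max\Bigl\{\|u_{\eps 0}\|_{\leb\infty}, \tfrac2\eps\Bigr\} \ure^{\|\lambda\|_{\leb\infty} t}
\]
makes the chemotaxis term $-\nabla\cdot(f_\eps(\ol u_\eps)\nabla\ve)$ vanish identically, and together with $-\mu\,\ol u_\eps^\kappa \le 0$ and $\lambda \le \|\lambda\|_{\leb\infty}$ this is an honest supersolution of the first equation, giving the $L^\infty$ bound in one line by the comparison principle; boundedness of $\ve$ in $\sob1q$ then follows routinely from the second equation. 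Your bootstrap should close for fixed $\eps$, but it is far from trivial to spell out (the circularity between $\ue$ and $\nabla\ve$ bounds that you acknowledge requires care, particularly in the early stages where the starting integrability of $\nabla \ve$ is low), and it misses the observation that makes the whole issue disappear. When comparing with the paper you should note that the regularization was \emph{designed} so that this supersolution trick works; using only $\|f_\eps\|_\infty \lt \infty$ gives up precisely the extra structure that makes the proof short.
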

\begin{proof}
  Existence and uniqueness in $\Ombar \times (0, \tmaxe)$ for some $\tmaxe \in (0, \infty]$
  can be shown as in~\cite[Theorem~3.1]{HorstmannWinklerBoundednessVsBlowup2005},
  nonnegativity follows from the maximum principle
  and as
  $\ol u_\eps(x, t) \defs \max\{\|u_{\eps 0}\|_{\leb\infty}, \frac2\eps\} \ure^{\|\lambda\|_{\leb\infty} t}$,
  $(x, t) \in \Ombar \times [0, \infty)$,
  defines a supersolution for the first equation in~\eqref{e210},
  the comparison principle asserts that $\limsup_{t \nea T} \|\ue(\cdot, t)\|_{\leb \infty} \le \ol u_\eps(\cdot, T)$
  is finite for all finite $T \in (0, \tmaxe]$,
  which in turn implies $\tmaxe = \infty$.
\end{proof}

Henceforth, for all $\eps \in (0, 1)$ we always denote the solution to~\eqref{e210} constructed in Lemma~\ref{lem2.1}
by $(\ue, \ve)$.

\section{A priori estimates}
In this section, we will first collect several a priori estimates.
Later on, in Lemma~\ref{lem2.6}, these will allow us to apply certain compactness theorems and then to construct a solution candidate for \eqref{e11}.
We begin by obtaining $L_{\loc}^\infty$-$L^1$ bounds both for $\ue$ and $\ve$ as well as a space-time bound for $\mu \ue^\kappa$.

\begin{lem}\label{lem2.2}
Let $T \gt 0$, $\eps \in (0, 1)$ and set $\lambda_1 \defs \|\lambda\|_{\leb\infty}$.
Then
\begin{equation}\label{e217}
\int_{\Omega} u_{\varepsilon}(\cdot, t) \leq \ure^{\lambda_1 T} \left( \int_{\Omega} u_{0} + 1 \right)
\end{equation}
as well as
\begin{equation}\label{e219}
\int_{\Omega} v_{\varepsilon}(\cdot, t)
\le \intom v_0 + 1 + \ure^{\lambda_1 T} \left( \intom u_0 + 1 \right)
\end{equation}
hold for all $t \in (0, T)$.
Moreover,
\begin{equation}\label{e218}
\int_0^T\int_\Omega \mu u_\varepsilon^\kappa
\leq \ure^{\lambda_1 T} \left( \intom u_0 + 1\right).
\end{equation}
\end{lem}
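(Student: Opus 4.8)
The plan is to integrate the first equation in~\eqref{e210} over $\Omega$ and use the no-flux boundary conditions to eliminate the diffusion and chemotaxis terms. Setting $y_\eps(t) \defs \intom \ue(\cdot, t)$, this yields the differential inequality $y_\eps'(t) = \intom \lambda \ue - \intom \mu \ue^\kappa \le \lambda_1 y_\eps(t)$, since $\mu \ge 0$ and $\ue \ge 0$. A Gronwall-type argument then gives $y_\eps(t) \le y_\eps(0) \ure^{\lambda_1 t} \le (\intom u_0 + 1) \ure^{\lambda_1 T}$ for $t \in (0, T)$, where the last step uses~\eqref{eq:bdd_init} (so that $\|u_{\eps 0}\|_{\leb1} \le \intom u_0 + 1$) together with $\ure^{\lambda_1 t} \le \ure^{\lambda_1 T}$; this is~\eqref{e217}. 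For~\eqref{e218}, I would instead keep the degradation term: integrating $y_\eps'(t) = \intom \lambda \ue - \intom \mu \ue^\kappa$ over $(0, T)$ and rearranging gives $\int_0^T \intom \mu \ue^\kappa = y_\eps(0) - y_\eps(T) + \int_0^T \intom \lambda \ue \le y_\eps(0) + \lambda_1 \int_0^T y_\eps(t) \dt$; bounding $y_\eps$ via~\eqref{e217} and estimating $\lambda_1 \int_0^T \ure^{\lambda_1 T} \dt \le \ure^{\lambda_1 T}$ after absorbing constants (more carefully: $y_\eps(0) + \lambda_1 T \ure^{\lambda_1 T}(\intom u_0 + 1) \le \ure^{\lambda_1 T}(\intom u_0 + 1)$ may need a slightly sharper Gronwall bookkeeping, but the crude estimate $\int_0^T \intom \mu \ue^\kappa \le y_\eps(0) + \lambda_1 \int_0^T y_\eps \le \ure^{\lambda_1 T}(\intom u_0 + 1)$ follows from the integral form $y_\eps(T) + \int_0^T \intom \mu \ue^\kappa = y_\eps(0) + \int_0^T \intom \lambda \ue$ and $y_\eps(T) \ge 0$, provided one reruns Gronwall carefully) gives the claim.

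For~\eqref{e219}, I would integrate the second equation in~\eqref{e210} over $\Omega$, again using $\partial_\nu \ve = 0$ to kill the Laplacian term, obtaining $\ddt \intom \ve = -\intom \ve + \intom \ue$. Writing $z_\eps(t) \defs \intom \ve(\cdot, t)$, this is $z_\eps'(t) = -z_\eps(t) + y_\eps(t)$, so $z_\eps(t) = z_\eps(0) \ure^{-t} + \int_0^t \ure^{-(t-\tau)} y_\eps(\tau) \dtau$. Using $z_\eps(0) \le \intom v_0 + 1$ from~\eqref{eq:bdd_init}, the bound~\eqref{e217} on $y_\eps$, and $\int_0^t \ure^{-(t-\tau)} \dtau \le 1$, I get $z_\eps(t) \le \intom v_0 + 1 + \ure^{\lambda_1 T}(\intom u_0 + 1)$, which is~\eqref{e219}.

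None of these steps presents a genuine obstacle; the argument is entirely standard, relying only on testing the equations with the constant function $1$ and elementary ODE comparison. The only mild care needed is in the constant-chasing for~\eqref{e218}: one must make sure the factor $\ure^{\lambda_1 T}$ genuinely dominates, which is cleanest if one derives~\eqref{e217} in the pointwise-in-$t$ form first and then feeds it into the integrated identity for the second equation, using $y_\eps(T) \ge 0$ to discard the boundary term rather than estimating it. I would present~\eqref{e217} first, then~\eqref{e218} as an immediate consequence of the same integrated identity, and finally~\eqref{e219}.
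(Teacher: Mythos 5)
You take essentially the same route as the paper — integrate each equation over $\Omega$ to kill the diffusion and chemotaxis terms, then argue by ODE comparison and variation of constants — so the substance of the argument matches. The one place where you deviate, and where your bookkeeping is genuinely shaky as written, is~\eqref{e218}: you first establish~\eqref{e217} by discarding the degradation term, and then try to feed the resulting uniform-in-$t$ bound back into the integrated identity. Your intermediate estimate $\lambda_1\int_0^T \ure^{\lambda_1 T}\dt \le \ure^{\lambda_1 T}$ picks up a spurious factor of $T$ and is false in general; to your credit, you flag this yourself, but you stop short of fixing it. The fix is simply to keep the $t$-dependent bound $y_\eps(t) \le y_\eps(0)\ure^{\lambda_1 t}$ inside the integral, which gives $\lambda_1 \int_0^T y_\eps(t)\dt \le y_\eps(0)\big(\ure^{\lambda_1 T} - 1\big)$ and combines with $y_\eps(0)$ to produce exactly $y_\eps(0)\ure^{\lambda_1 T}$. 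The paper sidesteps this re-substitution entirely by carrying the degradation term through the variation-of-constants formula and then using $\ure^{\lambda_1(t-s)}\ge 1$ to obtain $\intom \ue(\cdot,t) + \int_0^t\intom\mu\ue^\kappa \le \ure^{\lambda_1 t}\intom u_{\eps 0}$ in one stroke, from which~\eqref{e217} and~\eqref{e218} both follow immediately; this is a bit cleaner but not fundamentally different. Your derivation of~\eqref{e219} is identical to the paper's.
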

\begin{proof}
An integration of the first equation in $(\ref{e210})$ over $\Omega$ shows that
\begin{equation*}
\ddt\int_{\Omega}u_{\varepsilon} \le \lambda_1 \int_{\Omega} u_\varepsilon-\int_\Omega \mu u_\varepsilon^\kappa \qquad \text{in $(0, \infty)$}.
\end{equation*}
Thus, by an ODI comparison argument and the variations-of-constants formula,
\begin{align*}
      \intom \ue(\cdot, t)
  \le \ure^{\lambda_1 t} \intom u_{\eps 0}
      - \int_0^t  \ure^{\lambda_1 (t-s)} \intom \mu \ue^\kappa(x, s) \dx \ds
  \qquad \text{for all $t \in (0, T)$.}
\end{align*}
As $\ure^{\lambda_1 (t-s)} \ge 1$ whenever $t-s \ge 0$, we conclude
\begin{align*}
      \intom u(\cdot, t) + \int_0^t \intom \mu \ue^\kappa(x, s) \dx \ds
  \le \ure^{\lambda_1 t} \intom u_{\eps 0}
  \qquad \text{for all $t \in (0, T)$},
\end{align*}
which in view of \eqref{eq:bdd_init} immediately implies~\eqref{e217} and \eqref{e218}.

Moreover, integrating the second PDE in $(\ref{e210})$ results in
\begin{align*}
  \ddt \intom \ve + \intom \ve = \intom \ue \qquad \text{in $(0, T)$},
\end{align*}
so that another application of the variation-of-constants formula gives
\begin{align*}
      \intom \ve(\cdot, t)
  =   \ure^{-t} \intom v_{\eps 0} + \int_0^t \intom \ure^{-(t-s)} \ue(\cdot, s) \ds
  \le \intom v_{\eps 0} + \sup_{s \in (0, t)} \intom \ue(\cdot, s)
  \qquad \text{for all $t \in (0, T)$},
\end{align*}
which in virtue of \eqref{eq:bdd_init} and \eqref{e217} results in~\eqref{e219}.
\end{proof}

Next, we turn \eqref{e218} into a $L_{\loc}^\kappa$-$L^p$ bound for $\ue$,
making use of \eqref{cond:mu}, that is, the fact that $\intom \mu^{-s} \lt \infty$.
\begin{lem}\label{lm:u_l_kappa_l_p}
  Let $T \gt 0$
  and set $p \defs \frac{\kappa  s}{ s+1} \gt 0$.
  Then there is $C \gt 0$ such that
  \begin{align}\label{eq:u_l_kappa_l_p:est}
    \int_0^T \left( \intom \ue^p \right)^\frac{\kappa}{p} \lt C
    \qquad \text{for all $\eps \in (0, 1)$}.
  \end{align}
\end{lem}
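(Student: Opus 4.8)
The plan is to exploit the space-time bound \eqref{e218} on $\mu \ue^\kappa$ together with the integrability hypothesis \eqref{cond:mu} via Hölder's inequality. First I would fix $T \gt 0$ and observe that, by Lemma~\ref{lem2.2}, the quantity $\int_0^T \intom \mu \ue^\kappa$ is bounded by a constant independent of $\eps$. The goal is to "trade" the weight $\mu$ appearing there for a pure $L^p$ norm of $\ue$, at the cost of the finite quantity $\intom \mu^{-s}$. To that end, for fixed $t \in (0, T)$ I would write, with $p = \frac{\kappa s}{s+1}$,
\begin{align*}
  \intom \ue^p
  = \intom \bigl( \mu^{\frac{p}{\kappa}} \ue^p \bigr) \cdot \mu^{-\frac{p}{\kappa}}
  \le \left( \intom \mu \ue^\kappa \right)^{\frac{p}{\kappa}} \left( \intom \mu^{-\frac{p}{\kappa} \cdot \frac{\kappa}{\kappa - p}} \right)^{\frac{\kappa - p}{\kappa}},
\end{align*}
using Hölder's inequality with exponents $\frac{\kappa}{p}$ and $\frac{\kappa}{\kappa-p}$ (note $p \lt \kappa$ since $\frac{s}{s+1} \lt 1$). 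The crucial point is that the exponent on $\mu^{-1}$ in the second factor works out exactly to $s$: indeed $\frac{p}{\kappa} \cdot \frac{\kappa}{\kappa - p} = \frac{p}{\kappa - p}$, and substituting $p = \frac{\kappa s}{s+1}$ gives $\kappa - p = \frac{\kappa}{s+1}$, hence $\frac{p}{\kappa-p} = s$. So the second factor is precisely $\bigl( \intom \mu^{-s} \bigr)^{\frac{\kappa-p}{\kappa}}$, which is finite by \eqref{cond:mu} and independent of $\eps$ and $t$.

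Raising the resulting inequality to the power $\frac{\kappa}{p}$ yields
\begin{align*}
  \left( \intom \ue^p \right)^{\frac{\kappa}{p}}
  \le \left( \intom \mu^{-s} \right)^{\frac{\kappa - p}{p}} \intom \mu \ue^\kappa(\cdot, t),
\end{align*}
and then integrating over $t \in (0, T)$ and invoking \eqref{e218} gives
\begin{align*}
  \int_0^T \left( \intom \ue^p \right)^{\frac{\kappa}{p}}
  \le \left( \intom \mu^{-s} \right)^{\frac{\kappa - p}{p}} \ure^{\lambda_1 T} \left( \intom u_0 + 1 \right)
  =: C,
\end{align*}
which is the desired bound, with $C$ depending only on $T$, $n$, $\Omega$, $\mu$, $\lambda$, $\kappa$, $s$ and $u_0$, but not on $\eps$.

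There is essentially no serious obstacle here: the proof is a one-line Hölder estimate once one has noticed the right choice of exponents, and the only thing to verify is the bookkeeping identity $\frac{p}{\kappa-p} = s$, which is exactly why $p$ was defined as $\frac{\kappa s}{s+1}$. The mild subtlety worth a remark is the degenerate case where $\mu$ vanishes somewhere: there $\mu^{-s}$ is interpreted with the convention $\frac{c}{0} = \infty$ fixed earlier, but \eqref{cond:mu} guarantees this happens only on a set where $\mu^{-s}$ is still integrable, so the argument goes through unchanged. (One should also note $p \gt 1$ is \emph{not} needed for this lemma itself — that strict inequality, which follows from \eqref{cond:kappa}, is only used later — so here it suffices that $p \gt 0$, as stated.)
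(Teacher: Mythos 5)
Your proof is correct and follows essentially the same route as the paper: apply Hölder's inequality with exponents $\kappa/p$ and $\kappa/(\kappa-p)$ to split $\ue^p = \mu^{-p/\kappa}\cdot\mu^{p/\kappa}\ue^p$, observe that the resulting exponent on $\mu^{-1}$ is precisely $s$ (which is exactly why $p$ is chosen as $\frac{\kappa s}{s+1}$), and then integrate in time using the bound \eqref{e218}. Your additional remarks on the convention for $\mu^{-s}$ and on $p>1$ being unnecessary at this stage are accurate but not essential.
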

\begin{proof}
  Since $\frac{p}{\kappa-p} = \frac{\frac{\kappa  s}{ s+1}}{\kappa - \frac{\kappa  s}{ s+1}} =  s$,
  by \eqref{cond:mu} there is $c_1 \gt 0$ such that $\intom \mu^{-\frac{p}{\kappa-p}} \lt c_1$.
  Applying Hölder's inequality, we then obtain
  \begin{align*}
        \intom \ue^p
   &=   \intom  \mu^{-\frac p\kappa} \cdot \mu^{\frac p\kappa} \ue^p
    \le \left( \intom \mu^{-\frac{p}{\kappa-p}} \right)^\frac{\kappa-p}{\kappa} \left( \intom \mu \ue^\kappa \right)^\frac{p}{\kappa}
    \le c_2 \left( \intom \mu \ue^\kappa \right)^\frac{p}{\kappa}
    \qquad \text{in $(0, T)$ for all $\eps \in (0, 1)$},
  \end{align*}
  where $c_2 \defs c_1^\frac{\kappa-p}{\kappa} \gt 0$.
  Therefore, upon integrating,
  \begin{align*}
        \int_0^T \left( \intom \ue^p \right)^\frac{\kappa}{p}
    \le c_2 \int_0^T \intom \mu \ue^\kappa
    \qquad \text{for all $\eps \in (0, 1)$}.
  \end{align*}
  The statement follows by~\eqref{e218}.
\end{proof}

As already discussed in the introduction,
for obtaining an $L^r$ bound for $\ve$, $\eps \in (0, 1)$, uniform both in $\eps$ and locally in time,
we can make use either of the $L_{\loc}^\infty$-$L^1$ bound~\eqref{e217} or the $L_{\loc}^\kappa$-$L^p$ bound \eqref{eq:u_l_kappa_l_p:est}.
Both these cases will be handles simultaneously in the following
\begin{lem}\label{lemv}
  Let $T>0$ and suppose~\eqref{eq:u_l_kappa_l_p:est} holds for some $p \ge 1$.
  For any
  \begin{align}\label{eq:v:r_cond}
    r \in \left[1, \max\left\{\frac{\kappa n}{[\frac{\kappa n}{p} - 2(\kappa-1)]_+}, \frac{n}{n-2} \right\} \right),
  \end{align}
  we can find $C \gt 0$ such that
  \begin{equation*}
    \left\|v_{\varepsilon}(\cdot, t)\right\|_{L^r(\Omega)} \leq C \qquad \text { for all } t \in(0, T) \text { and } \eps \in(0,1).
  \end{equation*}
\end{lem}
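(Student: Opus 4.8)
The plan is to use a smoothing estimate for the Neumann heat semigroup applied to the variation-of-constants representation of $\ve$. Writing $\ve(\cdot,t) = \ure^{t(\Delta-1)} v_{\eps 0} + \int_0^t \ure^{(t-\tau)(\Delta-1)} \ue(\cdot,\tau)\dtau$, I would estimate the first term by $\|v_{\eps 0}\|_{\leb\infty}$ (which is bounded uniformly in $\eps$ by~\eqref{eq:conv_init}) and concentrate on the Duhamel term. The key tool is the standard $L^q$–$L^r$ estimate $\|\ure^{\sigma\Delta} w\|_{\leb r} \le c\,\sigma^{-\frac n2(\frac1q-\frac1r)}\|w\|_{\leb q}$ for $w$ with mean zero, together with the exponential decay from the $-\ve$ term, so that $\|\ure^{\sigma(\Delta-1)} w\|_{\leb r} \le c\,\ure^{-\sigma}(1+\sigma^{-\frac n2(\frac1q-\frac1r)})\|w\|_{\leb q}$ for general $w$. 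Applying this with $w = \ue(\cdot,\tau)$ gives, for $q \in \{1, p\}$ to be chosen,
\begin{align*}
  \|\ve(\cdot,t)\|_{\leb r}
  \le c_1 + c_2 \int_0^t \bigl(1 + (t-\tau)^{-\frac n2(\frac1q-\frac1r)}\bigr)\ure^{-(t-\tau)} \|\ue(\cdot,\tau)\|_{\leb q}\dtau.
\end{align*}

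For the two choices of $q$ I would proceed slightly differently. If $q=1$, then $\|\ue(\cdot,\tau)\|_{\leb1}$ is bounded uniformly on $(0,T)$ by~\eqref{e217}, so the integral is finite provided $\frac n2(1-\frac1r) < 1$, i.e.\ $r < \frac{n}{n-2}$ (with the convention that this is $\infty$ when $n=2$, matching the problematic dimensions). If instead $q=p$, then $\|\ue(\cdot,\tau)\|_{\leb p}$ is only controlled in an integrated sense: by~\eqref{eq:u_l_kappa_l_p:est} we have $\ue \in L^\kappa((0,T);\leb p)$ uniformly in $\eps$. I would then apply Hölder in time with exponents $\kappa$ and $\kappa' = \frac{\kappa}{\kappa-1}$, which requires the kernel $\tau \mapsto \tau^{-\frac n2(\frac1p-\frac1r)}\ure^{-\tau}$ to lie in $L^{\kappa'}(0,\infty)$; this holds iff $\frac n2(\frac1p-\frac1r)\kappa' < 1$, and solving this inequality for $r$ yields exactly the first bound $r < \frac{\kappa n}{[\frac{\kappa n}{p} - 2(\kappa-1)]_+}$ appearing in~\eqref{eq:v:r_cond} (the bracket $[\cdot]_+$ handling the case where the constraint is vacuous because $\frac{\kappa n}{p} \le 2(\kappa-1)$). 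Taking the better of the two ranges gives the stated condition~\eqref{eq:v:r_cond}.

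The only genuinely delicate point is bookkeeping near the temporal singularity $\tau \to t$ of the heat kernel: one must check the relevant exponent is $< 1$ (for the $q=1$ branch) or $< 1$ after the Hölder pairing (for the $q=p$ branch), and keep all constants independent of $\eps$ — which they are, since they come only from the semigroup estimates, from~\eqref{e217}, from~\eqref{eq:u_l_kappa_l_p:est}, and from $\|v_{\eps 0}\|_{\leb\infty} \le \|v_0\|_{\leb\infty}+1$ via~\eqref{eq:conv_init}. A minor subtlety is that for $r = 1$ the claim is trivial from~\eqref{e219}, so one may assume $r > 1$ and hence work with $q < r$ strictly, making all the Hölder exponents legitimate. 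Once the integral is shown to be finite and bounded uniformly on $(0,T)$, the proof is complete; I do not anticipate any obstruction beyond this routine (if slightly fiddly) exponent arithmetic.
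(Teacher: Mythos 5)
Your proposal is correct and follows essentially the same route as the paper: a variation-of-constants representation of $\ve$, the standard $L^q$--$L^r$ smoothing estimate for the Neumann heat semigroup, and a split into two cases — $q=1$ exploiting the $L^\infty((0,T);\leb1)$ bound \eqref{e217} (giving $r<\tfrac{n}{n-2}$) and $q=p$ exploiting the $L^\kappa((0,T);\leb p)$ bound \eqref{eq:u_l_kappa_l_p:est} via Hölder in time (giving $r<\tfrac{\kappa n}{[\frac{\kappa n}{p}-2(\kappa-1)]_+}$). The paper merely packages the two branches into a single parameter $\theta\in\{0,1\}$; your exponent arithmetic matches the paper's. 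One tiny imprecision: you invoke an $L^\infty$ bound $\|v_{\eps 0}\|_{\leb\infty}\le\|v_0\|_{\leb\infty}+1$, but \eqref{eq:conv_init}--\eqref{eq:bdd_init} only give $L^r$ bounds for $r<\infty$; this is harmless here, since for the term $\ure^{t(\Delta-1)}v_{\eps 0}$ the $L^r$ bound on $v_{\eps 0}$ for the relevant finite $r$ is all that is needed (and is what the paper uses).
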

\begin{proof}
  Fix $\theta \in \{0, 1\}$.
  Since by~\eqref{e217} and assumption
  \begin{align*}
    \sup_{\eps \in (0, 1)} \|\ue\|_{L^\infty((0, T); \leb1)} \lt \infty
    \quad \text{and} \quad
    \sup_{\eps \in (0, 1)} \|\ue\|_{L^\kappa((0, T); \leb p)} \lt \infty,
  \end{align*}
  we infer that
  \begin{align}\label{eq:v_leb_r:ue_bdd}
    (\ue)_{\eps \in (0, 1)}
    \quad \text{is bounded in} \quad
    L^{p_{\theta,1}}\left((0, T); \leb{p_{\theta,2}}\right),
  \end{align}
  where $p_{\theta, 1} \defs \frac{\kappa}{1-\theta}$ and $p_{\theta, 2} \defs \frac{p}{1+(p-1)\theta}$.
  (As can be seen by Hölder's inequality, \eqref{eq:v_leb_r:ue_bdd} even holds for all $\theta \in [0, 1]$
  but in the sequel we will only make use of \eqref{eq:v_leb_r:ue_bdd} for $\theta \in \{0, 1\}$.)

  Let
  \begin{align}\label{eq:v:def_r_theta}
          1
    \le   r \lt r_\theta
    \defs \frac{\kappa n p}{[\kappa n - 2 (\kappa-1)p - (\kappa n - (\kappa n - 2)p ) \theta]_+}
    = \begin{cases}
      \frac{\kappa n}{[\frac{\kappa n}{p} - 2(\kappa-1)]_+}, & \theta = 0, \\
      \frac{n}{n-2}, & \theta = 1.
    \end{cases}
  \end{align}
  Since $r_\theta \gt p_{\theta, 2}$, we may without loss of generality assume $r \gt p_{\theta, 2}$ due to Hölder's inequality.
  We now make use of the variation-of-constants formula,
  well-known semigroup estimates (cf.\ \cite[Lemma~1.3~(i)]{WinklerAggregationVsGlobal2010}),
  Hölder's inequality and \eqref{eq:conv_init}
  to obtain that with $p_{\theta, 1}' \defs 1 - \frac1{p_{\theta, 1}}$,
  \begin{align*}
          \|\ve(\cdot, t)\|_{\leb r}
    &\le  \|\ure^{t(\Delta-1)} v_{\eps 0}\|_{\leb r}
          + \int_0^t \|\ure^{(t-s)(\Delta-1) t} \ue(\cdot, s)\|_{\leb r} \ds \\
    &\le  c_1 \|v_{\eps 0}\|_{\leb r}
          + c_1 \int_0^t \left(1 + (t-s))^{-\frac n2 (\frac1{p_{\theta, 2}} - \frac1r)}\right)
            \|\ue(\cdot, s)\|_{\leb{p_{\theta,2}}} \ds \\
    &\le  c_2 \|v_0\|_{\leb r}
          + c_1 \left( \int_0^T \left(1 + s^{-\frac n2 (\frac1{p_{\theta, 2}} - \frac1r)} \right)^{p_{\theta, 1}'} \right)^\frac1{p_{\theta_1}'}
          \|\ue\|_{L^{p_{\theta,1}}((0, T); \leb{p_{\theta,2}})}
  \end{align*}
  holds for all $\eps \in (0, 1)$, $t \in (0, T)$ and some $c_1, c_2 \gt 0$.
  As
  \begin{align*}
          -\frac n2 \left(\frac1{p_{\theta, 2}} - \frac1r \right) \left(1 - \frac1{p_{\theta,1}} \right)
    &\gt  -\frac n2
            \left(\frac{1+(p-1)\theta}{p} - \frac{\kappa n - 2 (\kappa-1)p - [\kappa n - (\kappa n - 2)p ] \theta}{\kappa n p} \right)
            \frac{\kappa}{\kappa-(1-\theta)} \\
    & =   -\frac n2
            \cdot \frac{\kappa n(p-1)\theta + 2 (\kappa-1)p + [\kappa n - (\kappa n - 2)p ] \theta}{np}
            \cdot \frac{1}{\kappa + \theta - 1} \\
    & =   - \frac{\kappa-1 + \theta}{\kappa + \theta - 1}
    =     - 1
  \end{align*}
  and because of \eqref{eq:v_leb_r:ue_bdd},
  the right hand side therein is bounded independently of $\eps \in (0, 1)$ and $t \in (0, T)$.
  Finally, the statement is a direct consequence of \eqref{eq:v:def_r_theta}.
\end{proof}

Before using the information gathered in Lemma~\ref{lm:u_l_kappa_l_p} and Lemma~\ref{lemv} to obtain further a priori estimates,
we show that several parameters can be chosen suitably.
The following lemma is made possible precisely due to the conditions~\eqref{cond:kappa} and \eqref{cond:mu}.
Its importance will become apparent in the proof of Lemma~\ref{lem2.3} below.

\begin{lem}\label{lem:params}
  Set $p \defs \frac{\kappa s}{s+1}$, $p' \defs \frac{p}{p-1}$ and $\kappa' \defs \frac{\kappa}{\kappa-1}$.
  Then there exist $q, r \gt 1$ satisfying $q \gt p'$, \eqref{eq:v:r_cond},
  \begin{align}\label{eq:params:statement}
    \theta(q, r) \defs \frac{\frac{1}{r}-\frac{1}{q}}{\frac{1}{n}-\frac{1}{2}+\frac{1}{r}} \in (0, 1)
    \quad \text{and} \quad
    \kappa' \theta(q, r) \lt 2.
  \end{align}
\end{lem}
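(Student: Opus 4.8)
The plan is to reparametrize $\theta$ in terms of $\frac1q$ and $\frac1r$, exploit monotonicity, and split into an easy regime and one governed precisely by \eqref{cond:kappa}. Throughout, abbreviate $c \defs \frac12 - \frac1n$ and record that \eqref{cond:kappa} forces (via its first entry) $p = \frac{\kappa s}{s+1} \gt \frac{2n}{n+2} \ge 1$, so that $p'$ is well defined and moreover $p' \lt \frac{2n}{n-2}$ and $\frac1{p'} = 1 - \frac1p \gt c$ (with the convention $\frac{2n}{n-2} = \infty$ when $n=2$). Writing $\theta(q,r) = \frac{y-x}{y-c}$ with $x = \frac1q$, $y = \frac1r$, a differentiation shows that on $\{q \lt \frac{2n}{n-2},\ r \lt \frac{2n}{n-2},\ r \lt q\}$ the quantity $\theta$ takes values in $(0,1)$, is strictly increasing in $q$ and strictly decreasing in $r$. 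Finally, set $\rho \defs \max\{A, \frac{n}{n-2}\}$ with $A \defs \frac{\kappa n}{[\frac{\kappa n}{p}-2(\kappa-1)]_+}$, so that \eqref{eq:v:r_cond} reads $r \in [1,\rho)$.

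\emph{Case 1: $p' \lt \rho$} (which always holds when $n=2$, since then $\rho = \infty$). Then $p' \lt \min\{\rho, \frac{2n}{n-2}\}$, so I fix some $q$ in this nonempty interval and then some $r \in (1,q)$ close enough to $q$ that $\kappa'\theta(q,r) \lt 2$; this is possible since $\theta(q,r) \to 0$ as $r \to q^-$. As $q \gt p'$, $r \lt \rho$ and $\theta(q,r) \in (0,1)$, such a pair is as required.

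\emph{Case 2: $p' \ge \rho$.} Then $\rho$ is finite --- hence $n \ge 3$ and $A \lt \infty$ --- and, since $\rho \ge \frac{n}{n-2}$ and $\rho \le p' \lt \frac{2n}{n-2}$, both $\frac1\rho - c \gt 0$ and (together with $\rho \ge A$) $\frac1A - c \gt 0$. On the admissible set $\{p' \lt q \lt \frac{2n}{n-2},\ 1 \le r \lt \rho\}$ --- where $r \lt \rho \le p' \lt q$ holds automatically, so the monotonicity above applies --- the infimum of $\theta$ is approached as $q$ decreases to $p'$ and $r$ increases to $\rho$ and equals $\theta_* \defs \frac{1/\rho - 1/p'}{1/\rho - c} \ge 0$. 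By continuity it therefore suffices to prove the \emph{strict} inequality $\kappa'\theta_* \lt 2$: once this is known, any $q$ slightly above $p'$ and $r$ slightly below $\rho$ form an admissible pair with $\theta(q,r) \in (0,1)$ and $\kappa'\theta(q,r) \lt 2$, and $q \gt p'$, \eqref{eq:v:r_cond} hold as well.

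To prove $\kappa'\theta_* \lt 2$, the decisive point --- and, I expect, the step requiring the most care --- is that one need not decide which of $A$ and $\frac{n}{n-2}$ is larger. Since $\sigma \mapsto \frac{\sigma - 1/p'}{\sigma - c}$ is strictly increasing (its derivative has the sign of $\frac1{p'} - c \gt 0$) and $\frac1\rho = \min\{\frac1A, \frac{n-2}{n}\}$ lies below both $\frac1A$ and $\frac{n-2}{n}$, we obtain
\[
  \kappa' \theta_*
  \le \min\left\{ \kappa' \cdot \frac{(n-2)/n - 1/p'}{(n-2)/n - c}, \; \kappa' \cdot \frac{1/A - 1/p'}{1/A - c} \right\}.
\]
A direct computation --- inserting $\frac1{p'} = 1 - \frac1p$, $\frac1A = \frac1p - \frac{2(\kappa-1)}{\kappa n}$ and $\frac1p = \frac{s+1}{\kappa s}$, and using that both denominators are positive --- shows that the first entry of this minimum is $\lt 2$ exactly when $\kappa \gt \frac{2(n-1)s+n}{ns}$, and the second exactly when $\kappa \gt \frac{2(n+2)s+2n}{(n+4)s}$. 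As \eqref{cond:kappa} guarantees that at least one of these two inequalities holds, $\kappa'\theta_* \lt 2$ follows, and with it the lemma.
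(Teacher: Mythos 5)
Your proof is correct, and it takes a genuinely different route from the paper's. The paper resolves the lemma through four subcases ($n=2$; $n\ge 3$ with $\kappa\ge 2$; then, for $n\ge 3$ and $\kappa<2$, two further subcases according to which of the two terms in the inner minimum of \eqref{cond:kappa} applies), in each of the latter two constructing bespoke thresholds $q_{\sup}$ and $r_{\sup}$ and verifying $\kappa'\theta<2$ near the corner $(q_{\sup},r_{\sup})$. You instead rewrite $\theta$ in the variables $(1/q,1/r)$, record its monotonicity on $\{r<q<\tfrac{2n}{n-2}\}$, and reduce the whole assertion to a single strict inequality $\kappa'\theta_*<2$ at the \emph{opposite} corner $(q,r)=(p',\rho)$, where $\theta$ is infimal; continuity then produces an interior admissible pair. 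The key simplification is the observation that $\sigma\mapsto\frac{\sigma-1/p'}{\sigma-c}$ is increasing (its positivity of slope being exactly the inequality $1/p'>c$ you isolated), so that $\kappa'\theta_*$ is dominated by the minimum of the two expressions obtained from the two candidates for $\rho$. Each of these is $<2$ precisely under one of the two inequalities inside the $\min$ of \eqref{cond:kappa}, and \eqref{cond:kappa} guarantees at least one holds. This collapses the paper's case analysis to two cases, dispenses with $q_{\sup}$ entirely, and avoids having to decide which of $A$ and $\tfrac{n}{n-2}$ is larger. The final algebraic checks are equivalent to the paper's two explicit computations, but the structure of the argument is cleaner and more systematic; all the steps, including the positivity of the relevant denominators needed to justify the cross-multiplications, hold as claimed.
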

\begin{proof}
  Noting that $0 \lt r \lt q \lt \frac{2n}{n-2}$ implies $\theta(q, r) \in (0, 1)$
  and making first use of that $\kappa \gt \frac{2n(s+1)}{(n+2)s}$ by \eqref{cond:kappa} in calculating
  \begin{align*}
        p'
    =   \frac{1}{1-\frac1p}
    =   \frac{1}{1-\frac{s+1}{\kappa s}}
    \lt \frac{1}{1-\frac{(n+2)(s+1)}{2n (s+1)}}
    =   \frac{2n}{2n - (n+2)}
    =   \frac{2n}{n-2},
  \end{align*}
  we see that the statement follows
  once we show that there are $q \in (p', \frac{2n}{n-2})$ and $r \in (1, q)$
  satisfying~\eqref{eq:v:r_cond} and $\kappa' \theta(q, r) \lt 2$.

  If $n=2$, this can easily be achieved by choosing $q \in (p', \frac{2n}{n-2})$ arbitrarily 
  and $r \ge 1$ close enough to $q$ such that $\theta(q, r) = 1-\frac rq \lt \frac2{\kappa'}$,
  as \eqref{eq:v:r_cond} is then equivalent to $r \in [1, \infty)$.
  Moreover, for any $n \gt 2$ and $\kappa \ge 2$,
  we may choose $q \in (p', \frac{2n}{n-2})$ arbitrarily and $r \in (1, q)$ such that \eqref{eq:v:r_cond} is fulfilled,
  as then $\kappa' \theta(q, r) \le 2 \theta(q, r) \lt 2$.

  Thus, regarding the remaining case $n \ge 3$ and $\kappa \le 2$,
  we will now make use of the yet unused condition in~\eqref{cond:kappa},
  namely of $\kappa \gt \min\left\{ \frac{2(n-1) s + n}{n s}, \frac{2(n+2) s + 2n}{(n+4) s} \right\}$,
  and divide the remainder of this proof in two parts.

  \emph{Case 1:} $n \gt 2$ and $\kappa \in (\frac{2(n-1)s+n}{ns}, 2)$.
    We set $\qsup \defs \frac{\kappa n}{n-2}$
    and calculate
    \begin{align*}
          \frac{\qsup}{p'}
      =   \frac{\kappa n}{n-2} \left(1 - \frac{s+1}{\kappa s} \right)
      =   \frac{\kappa n - n - \frac ns}{n-2}
      \gt \frac{2(n-1) + \frac ns - n - \frac ns}{n-2}
      =   1,
    \end{align*}
    hence we may fix $q \in (p', \min\{\frac{2n}{n-2}, \qsup\})$.
    
    Wet set $\rsup \defs \frac{n}{n-2}$ and note that $r \in (1, \rsup)$ implies \eqref{eq:v:r_cond}.
    If $q \le \rsup$ and hence $\theta(q, \rsup) \le 0$,
    we may choose $r \in (1, q) = (1, \min\{q, \rsup\})$ sufficiently large such that $\kappa' \theta(q, r) \lt 2$.
    Thus, we may assume $q \gt \rsup$.
    We can then fix $r \in (1, \rsup) = (1, \min\{q, \rsup\})$ such that $\kappa' \theta(q, r) \lt 2$
    since due to $\frac{\kappa'}{\kappa} = \kappa'-1$ we may calculate
    \begin{align*}
          \kappa' \theta(q, r)
      \lt \kappa' \cdot \frac{\frac1{\rsup}-\frac1{\qsup}}{\frac{2-n}{2n}+\frac{1}{\rsup}}
      =   \kappa' \cdot \frac{\frac{n-2}{n} - \frac{n-2}{\kappa n}}{\frac{2-n}{2n} + \frac{n-2}{n}}
      =   \kappa' \cdot \frac{(1-\frac1\kappa) \frac{n-2}{n}}{\frac{n-2}{2n}}
      =   2.
    \end{align*}

  \emph{Case 2:} $n \gt 2$ and $\kappa \in (\frac{2(n+2)s+2n}{(n+4)s}, 2)$.
    We set $\qsup \defs \frac{\kappa^2 ns}{[(\kappa s - (s+1)) (\kappa n - (n+4)) + n(s+1) - 4]_+}$.
    As $\kappa \gt \frac{2n(s+1)}{(n+2)s} \gt \frac{s+1}{s}$ ensures that the denominator in the first term in the following calculation is positive,
    we may estimate
    \begin{align*}
            \frac{n(s+1) - 4}{[\kappa s - (s+1)] (n + 4)}
      &\lt  \frac{n(s+1) - 4}{2(n+2)s + 2n - (n+4)(s+1)}
       =    \frac{ns + n - 4}{ns + n - 4}
       =    1
    \end{align*}
    and because of $p' = \frac{\kappa s}{\kappa s - (s+1)}$,
    we obtain
    \begin{align*}
            \frac{p'}{\qsup}
      \le   \frac{\kappa n - (n+4) + \frac{n(s+1) - 4}{\kappa s - (s+1)}}{\kappa n}
      \lt   \frac{\kappa n}{\kappa n}
      =     1,
    \end{align*}
    allowing us to again fix $q \in (p', \min\{\frac{2n}{n-2}, \qsup\})$.

    Making use of $p \le \kappa \lt 2$,
    we moreover see that
    $\rsup \defs \frac{\kappa n}{\frac{\kappa n}{p} - 2(\kappa-1)} \le \frac{\kappa n}{n - 2(\kappa-1)} \lt \frac{2n}{n-2}$
    and, arguing as in Case~1, we may assume $q \gt \rsup$.
    Rewriting  $\qsup$ and $\rsup$ as
    $\qsup = \frac{\kappa^2 ns}{[\kappa^2 n s - \kappa n(2s+1) + 2(n(s+1) - 2(\kappa-1)s)_+}$ and
    $\rsup = \frac{\kappa n s}{n(s+1) - 2(\kappa-1)s}$, respectively, 
    and 
    this time not only making use of $\frac{\kappa'}{\kappa} = \kappa' - 1$,
    but also of $(\kappa' - 1) \kappa = \kappa'$ and $\kappa' (\kappa - 1) = \kappa$,
    we obtain
    \begin{align*}
            \kappa' \cdot \frac{\frac{1}{\rsup}-\frac{1}{q}}{\frac{1}{n}-\frac{1}{2}+\frac{1}{\rsup}}
      &\lt  \frac{\frac{\kappa'}{\rsup}-\frac{\kappa'}{\qsup}}{\frac{1}{n}-\frac{1}{2}+\frac{1}{\rsup}} \\
      &\le  \frac{2 \kappa' [n(s+1) - 2(\kappa-1)s] - 2 (\kappa'-1) \big[ \kappa^2 n s - \kappa n(2s+1) + 2[n(s+1) - 2(\kappa-1)s] \big]}
              {2\kappa s - \kappa n s + 2 [n(s+1) - 2(\kappa-1)s]} \\
      &=    \frac{2(\kappa'-1) \kappa [n (2s+1) - \kappa ns] - (2 \kappa' - 4) [n(s+1) - 2(\kappa-1)s] }
              {2\kappa s - \kappa n s + 2 [n(s+1) - 2(\kappa-1)s]} \\
      &=    \frac{2\kappa'(\kappa-1) (2s - ns) + 4 \big[n(s+1) - 2(\kappa-1)s \big]}
              {2\kappa s - \kappa n s + 2 [n(s+1) - 2(\kappa-1)s]}
      =     2.
    \end{align*}
    Therefore, we may again fix $r \in (1, \rsup) = (1, \min\{q, \rsup\})$ such that $\kappa' \theta(q, r) \lt 2$.
    By the definition of $\rsup$, we finally see that $r$ satisfies \eqref{eq:v:r_cond}.
\end{proof}

We now further gain certain space-time bounds, inter alia for $\ue \ve$ and $\nabla \ve$.
This is achieved by testing the second equation in \eqref{e11} with $\ve$
and making use of the previous lemma in conjunction with the Gagliardo--Nirenberg inequality
in order to handle the production term $+\ue$ in that equation.

\begin{lem}\label{lem2.3}
There exist $p \gt 1$, $q \gt \frac{p}{p-1}$ and $\gamma \gt \frac{\kappa}{\kappa-1}$
such that for all $T \gt 0$
there is $C \gt 0$ with the property that
for all $\eps \in (0, 1)$ we have
\begin{equation}\label{eq:ddt_v2:est}
    \int_0^T \left(\int_{\Omega}u_{\varepsilon}^{p}\right)^{\frac{\kappa}{p}}
  + \int_0^T \left(\int_{\Omega}v_{\varepsilon}^{q}\right)^{\frac{\gamma}{q}}
  + \int_0^T \int_{\Omega}\left|\nabla v_{\varepsilon}\right|^{2}
  \leq C.
\end{equation}
\end{lem}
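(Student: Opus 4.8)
The plan is to test the second equation in \eqref{e210} by $\ve$ and to absorb the resulting production term using the bounds already available from Lemma~\ref{lm:u_l_kappa_l_p}, Lemma~\ref{lemv} and Lemma~\ref{lem:params}. Concretely, with $p \defs \frac{\kappa s}{s+1}$ (so that \eqref{eq:u_l_kappa_l_p:est} holds by Lemma~\ref{lm:u_l_kappa_l_p}), I fix $q, r \gt 1$ as provided by Lemma~\ref{lem:params}, satisfying $q \gt p' = \frac{p}{p-1}$, the range condition \eqref{eq:v:r_cond}, and $\theta \defs \theta(q,r) \in (0,1)$ with $\kappa' \theta \lt 2$. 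Multiplying $\vet = \Delta \ve - \ve + \ue$ by $\ve$ and integrating over $\Omega$ yields
\[
  \frac{1}{2}\ddt \intom \ve^2 + \intom |\nabla \ve|^2 + \intom \ve^2 = \intom \ue \ve
\]
on $(0,T)$. Integrating in time over $(0,T)$ and dropping the (nonnegative) $\intom \ve^2$ term on the left, it suffices to bound $\int_0^T \intom \ue \ve$ suitably.

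**Estimating the coupling term.**
Here I would use Hölder's inequality in space with exponents $\kappa$ and $\kappa' = \frac{\kappa}{\kappa-1}$ to get $\intom \ue \ve \le \|\ue\|_{\leb{\kappa}} \|\ve\|_{\leb{\kappa'}}$; since $p \le \kappa$ one should instead interpolate so that only $\|\ue\|_{\leb{p}}$ enters — more precisely, estimate $\intom \ue \ve$ by Hölder using the pair $(p, p')$, giving $\intom \ue \ve \le \|\ue\|_{\leb p}\|\ve\|_{\leb{p'}}$, and then control $\|\ve\|_{\leb{p'}}$ by the Gagliardo--Nirenberg inequality
\[
  \|\ve\|_{\leb{p'}} \le c_1 \|\nabla \ve\|_{\leb2}^{\theta} \|\ve\|_{\leb1}^{1-\theta} + c_2 \|\ve\|_{\leb1},
\]
where (after also interpolating $\leb{p'}$ against $\leb q$ if needed so that the GN exponent matches $\theta(q,r)$) the exponent $\theta$ is exactly $\theta(q,r)$ from \eqref{eq:params:statement}. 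Using the $L^1$-bound \eqref{e219} for $\ve$, and Young's inequality to split off a small multiple of $\|\nabla \ve\|_{\leb2}^2$ — which is possible precisely because $\kappa'\theta \lt 2$, so the conjugate Young exponent applied to $\|\ue\|_{\leb p}$ is at most $\kappa$ — one arrives at
\[
  \int_0^T \intom \ue \ve
  \le \tfrac12 \int_0^T \intom |\nabla \ve|^2 + c_3 \int_0^T \|\ue\|_{\leb p}^{\kappa} + c_4,
\]
and the last $\ue$-integral is finite by \eqref{eq:u_l_kappa_l_p:est}. Absorbing the gradient term on the left then gives a bound for $\frac12\intom\ve^2(\cdot,T) + \frac12\int_0^T\intom|\nabla\ve|^2$, hence for $\int_0^T\intom|\nabla\ve|^2$, uniformly in $\eps$. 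The $\ue$-term in \eqref{eq:ddt_v2:est} is Lemma~\ref{lm:u_l_kappa_l_p} verbatim.

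**Obtaining the $\ve$-spacetime bound and conclusion.**
It remains to produce $\gamma \gt \kappa'$ and $q \gt p'$ with $\int_0^T (\intom \ve^q)^{\gamma/q} \le C$. For this I interpolate $\leb q$ between $\leb1$ and $W^{1,2}(\Omega)$ via Gagliardo--Nirenberg, $\|\ve\|_{\leb q} \le c_5 \|\nabla\ve\|_{\leb2}^{\theta(q,1)}\|\ve\|_{\leb1}^{1-\theta(q,1)} + c_6\|\ve\|_{\leb1}$ — note $q \lt \frac{2n}{n-2}$ so $\theta(q,1) \in (0,1)$ — raise to a power $\gamma$ chosen so that $\gamma \theta(q,1) = 2$ (equivalently $\gamma = 2/\theta(q,1)$), integrate over $(0,T)$, use the $L^1$-bound \eqref{e219} and the spacetime gradient bound just obtained. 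One must check $\gamma = 2/\theta(q,1) \gt \kappa'$, i.e.\ $\kappa'\theta(q,1) \lt 2$; since $\theta(q,1) \lt \theta(q,r)$ whenever $r \gt 1$, this follows from $\kappa'\theta(q,r) \lt 2$ in \eqref{eq:params:statement}, possibly after shrinking $q$ slightly (still keeping $q \gt p'$, which is an open condition). Combining the three contributions yields \eqref{eq:ddt_v2:est}. The main obstacle is bookkeeping: choosing the interpolation exponents so that the two distinct Young/GN splittings (for the coupling term and for the $\ve^q$ bound) are simultaneously consistent with $q \gt p'$, $\gamma \gt \kappa'$, $\theta(q,r)\in(0,1)$ and $\kappa'\theta(q,r)\lt 2$ — but this is exactly what Lemma~\ref{lem:params} was set up to deliver, so the remaining work is routine interpolation and Young's inequality.
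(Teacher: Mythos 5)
There is a genuine gap, and it stems from the same oversight in two places: you do not actually use the $L^r$-bound for $\ve$ from Lemma~\ref{lemv}, even though you list it among your tools. Instead you fall back on the $L^1$-bound~\eqref{e219}, and that is too weak.

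First, when you write the Gagliardo--Nirenberg inequality with the low-order norm $\|\ve\|_{\leb1}$ but claim the interpolation exponent is $\theta(q,r)$, the two do not match: $\theta(q,r)$ is precisely the exponent for interpolating $\leb q$ between $W^{1,2}(\Omega)$ and $\leb r$, so pairing it with $\|\ve\|_{\leb1}^{1-\theta}$ is inconsistent. The paper's argument genuinely needs the $\leb r$-bound coming from Lemma~\ref{lemv}: it applies GN to $\|\ve\|_{\leb q}$ against $\|\nabla\ve\|_{\leb2}$ and $\|\ve\|_{\leb r}$ with exponent $\theta(q,r)$, raises to the power $\gamma$ (chosen with $\kappa' \lt \gamma$ and $\gamma\theta(q,r) \le 2$), and then uses $q \gt p'$ together with Young's inequality to absorb $\|\ve\|_{\leb{p'}}^{\kappa'}$ into $\frac12\|\nabla\ve\|_{\leb2}^2 + c$. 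Using only $\|\ve\|_{\leb1}$ would force the exponent up to $\theta(q,1)$ (or $\theta(p',1)$), and for this one would need $\kappa'\theta(q,1) \lt 2$, which is \emph{not} guaranteed by \eqref{cond:kappa}: e.g.\ for $s \gt n/2$ it can fail while all conditions in \eqref{cond:kappa} hold. Lemma~\ref{lemv} is not optional bookkeeping here; it is exactly what makes $r \gt 1$ available.

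Second, the monotonicity you invoke is backwards. Writing $\theta(q,r)=\frac{\frac1r-\frac1q}{\frac1n-\frac12+\frac1r}$ and differentiating with respect to $\frac1r$, one finds $\frac{\partial\theta}{\partial(1/r)}=\frac{\frac1n-\frac12+\frac1q}{(\frac1n-\frac12+\frac1r)^2}\gt0$ because $q\lt\frac{2n}{n-2}$. Hence $\theta(q,r)$ is \emph{decreasing} in $r$, so $\theta(q,1)\gt\theta(q,r)$ for $r\gt1$, the opposite of what you assert. Consequently $\kappa'\theta(q,r)\lt2$ does not imply $\kappa'\theta(q,1)\lt2$, and your construction of $\gamma=2/\theta(q,1)$ in the last paragraph does not produce $\gamma\gt\kappa'$ in general. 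The fix is again to keep $r$ as supplied by Lemma~\ref{lem:params} and to interpolate against $\leb r$ throughout, as the paper does.
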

\begin{proof} 
We multiply the second equation in $\eqref{e210}$ with $v_\varepsilon$ and integrate by parts to
find that
\begin{equation}\label{e221}
\frac12\ddt\int_{\Omega}v_{\varepsilon}^2=-\int_{\Omega}|\nabla v_\varepsilon|^2
-\int_\Omega v_\varepsilon^2+\int_\Omega u_\varepsilon v_\varepsilon
\qquad \text{in $(0, T)$ for all $\eps \in (0, 1)$}.
\end{equation}

We set again $p \defs \frac{\kappa s}{s+1}$, $p' \defs \frac{p}{p-1}$ and $\kappa' \defs \frac{\kappa}{\kappa-1}$.
By Hölder's and Young's inequalities, we then have
\begin{equation}\label{e222}
      \intom \ue \ve
  \le \left( \intom \ue^p \right)^\frac1p \left( \intom \ve^{p'} \right)^\frac{1}{p'}
  \le \left( \intom \ue^p \right)^\frac{\kappa}{p} + \left( \intom \ve^{p'} \right)^\frac{\kappa'}{p'}
  \qquad \text{in $(0, T)$ for all $\eps \in (0, 1)$}.
\end{equation}

Lemma~\ref{lem:params} allows us to fix $q \gt p'$ and $r \ge 1$ satisfying \eqref{eq:v:r_cond} and \eqref{eq:params:statement}.
In particular, $\theta \defs \theta(q, r) \lt \frac{2}{\kappa'}$,
hence there is $\gamma \gt \kappa'$ such that still $\theta \gamma \le 2$.
Thus, we may make use of the Gagliardo--Nirenberg inequality (applicable due to the first condition in~\eqref{eq:params:statement}),
Lemma~\ref{lemv} (which we may employ because of Lemma~\ref{lm:u_l_kappa_l_p} and since \eqref{eq:v:r_cond} holds)
and Young's inequality
to obtain $c_1, c_2, c_3 \gt 0$ such that
\begin{equation}\label{e2222a}
\begin{split}
        \left\|\ve(\cdot, t)\right\|_{L^{q}(\Omega)}^{\gamma}
  &\leq c_1\|\nabla \ve(\cdot, t)\|_{L^{2}(\Omega)}^{\theta\gamma}
        \left\|\ve(\cdot, t)\right\|_{L^{r}(\Omega)}^{(1-\theta)\gamma}
        + c_1\left\|\ve(\cdot, t)\right\|_{L^{r}(\Omega)}^{\gamma} \\
  &\leq c_2 \left\|\nabla \ve(\cdot, t)\right\|_{L^{2}(\Omega)}^{\theta \gamma} + c_2 \\
  &\leq c_3 \left\|\nabla \ve(\cdot, t)\right\|_{L^{2}(\Omega)}^2 + c_3
  \qquad \text{for all $\eps \in (0, 1)$ and $t \in (0, T)$}.
\end{split}
\end{equation}

Here another application of Young's inequality provides us with $c_4 \gt 0$ such that
\begin{equation}\label{eq:ddt_v2:v}
        \left\|\ve(\cdot, t)\right\|_{L^{p'}(\Omega)}^{\kappa'}
   \leq \frac12 \left\|\nabla \ve(\cdot, t)\right\|_{L^{2}(\Omega)}^2 + c_4
  \qquad \text{for all $t \in (0, T)$ and $\eps \in (0, 1)$}.
\end{equation}

Since that the asserted bound for $\ue$ was already proven in \eqref{e222},
combining~\eqref{e221} with \eqref{eq:u_l_kappa_l_p:est} and \eqref{eq:ddt_v2:v},
after integrating results  (inter alia) in
\begin{align*}
  \int_0^T \intom |\nabla \ve|^2 \le c_5 
  \qquad \text{for all and $\eps \in (0, 1)$}
\end{align*}
for some $c_5 \gt 0$.
Together with~\eqref{e2222a} this implies \eqref{eq:ddt_v2:est}.
\end{proof}
The fact that we could derive an $L_{\loc}^2$-$L^2$ bound for $\nabla v$ in Lemma~\ref{lem2.3}
relied on Lemma~\ref{lem:params} and thus, by extension, also on our main condition \eqref{cond:kappa}.
Its importance first becomes apparent in the following lemma, providing a space-time bound for $\frac{|\nabla \ue|^2}{(\ue + 1)^2}$.
\begin{lem}\label{lem2.4}
For any $T \gt 0$ there is $C \gt 0$ such that
\begin{equation}\label{e223}
\int_{0}^{T} \int_{\Omega} \frac{\left|\nabla u_{\varepsilon}\right|^{2}}{\left(u_{\varepsilon}+1\right)^{2}} \leq C
\qquad \text{for all $\eps \in (0, 1)$}.
\end{equation}
\end{lem}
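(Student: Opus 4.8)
The standard route is to test the first equation in \eqref{e210} with $\frac{1}{\ue+1}$. Since $(\ue+1)_t = \uet$, integration by parts over $\Omega$ (using the no-flux conditions) yields
\[
  \ddt \intom \ln(\ue+1)
  = \intom \frac{|\nabla \ue|^2}{(\ue+1)^2}
    - \intom \frac{f_\eps(\ue)}{(\ue+1)^2} \nabla \ue \cdot \nabla \ve
    + \intom \frac{\lambda \ue}{\ue+1}
    - \intom \frac{\mu \ue^\kappa}{\ue+1}.
\]
Rearranging, the target quantity $\intom \frac{|\nabla \ue|^2}{(\ue+1)^2}$ equals $\ddt \intom \ln(\ue+1)$ plus the chemotaxis term plus $- \intom \frac{\lambda \ue}{\ue+1} + \intom \frac{\mu \ue^\kappa}{\ue+1}$. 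I would integrate this identity over $(0,T)$. The time-derivative term telescopes to $\intom \ln(\ue(\cdot,T)+1) - \intom \ln(u_{\eps0}+1)$; by the elementary bound $\ln(\ue+1) \le \ue$, the boundary term at time $T$ is controlled by $\intom \ue(\cdot,T)$, which is bounded uniformly in $\eps$ (and $T$-locally) by \eqref{e217} in Lemma~\ref{lem2.2}, while the initial term is bounded via $\ln(u_{\eps0}+1)\le u_{\eps0}$ together with \eqref{eq:bdd_init}. The term $- \intom \frac{\lambda \ue}{\ue+1}$ is bounded by $\|\lambda\|_{\leb\infty}|\Omega|$ since $\frac{\ue}{\ue+1}\le 1$; likewise $\intom \frac{\mu \ue^\kappa}{\ue+1} \le \intom \mu \ue^{\kappa-1} \le \|\mu\|_{\leb\infty}\intom \ue^{\kappa-1}$, which (using $\kappa>1$ so $\ue^{\kappa-1}\le \ue + 1$, hence $\intom \ue^{\kappa-1} \le \intom \ue + |\Omega|$) is again controlled by \eqref{e217}. (Alternatively one may simply drop $-\intom\frac{\mu\ue^\kappa}{\ue+1}\le 0$ entirely.)

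The crux is the chemotaxis term $-\int_0^T \intom \frac{f_\eps(\ue)}{(\ue+1)^2} \nabla \ue \cdot \nabla \ve$. Using $0 \le f_\eps(\ue) \le \ue \le \ue+1$, we have $\frac{f_\eps(\ue)}{(\ue+1)^2} \le \frac{1}{\ue+1}$, so by Young's inequality
\[
  -\int_0^T \intom \frac{f_\eps(\ue)}{(\ue+1)^2} \nabla \ue \cdot \nabla \ve
  \le \frac12 \int_0^T \intom \frac{|\nabla \ue|^2}{(\ue+1)^2}
      + \frac12 \int_0^T \intom \frac{f_\eps(\ue)^2}{(\ue+1)^2} |\nabla \ve|^2
  \le \frac12 \int_0^T \intom \frac{|\nabla \ue|^2}{(\ue+1)^2}
      + \frac12 \int_0^T \intom |\nabla \ve|^2.
\]
The first term is absorbed into the left-hand side, and the second is exactly the space-time gradient bound for $\ve$ established in Lemma~\ref{lem2.3} (the term $\int_0^T\intom|\nabla\ve|^2\le C$ in \eqref{eq:ddt_v2:est}), which is uniform in $\eps$ and depends only on $T$. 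This is precisely the point where the main condition \eqref{cond:kappa} enters, via its role in Lemma~\ref{lem:params} and Lemma~\ref{lem2.3}.

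Collecting all estimates, moving the absorbed half of $\int_0^T\intom\frac{|\nabla\ue|^2}{(\ue+1)^2}$ to the left and multiplying by $2$, gives \eqref{e223} with a constant $C>0$ depending only on $T$, $\Omega$, $\|\lambda\|_{\leb\infty}$, $\|\mu\|_{\leb\infty}$, $\intom u_0$ and the constant from Lemma~\ref{lem2.3}. The main obstacle is really just the bookkeeping of the chemotaxis term; the one subtlety worth care is ensuring the absorption is legitimate, i.e.\ that $\int_0^T\intom\frac{|\nabla\ue|^2}{(\ue+1)^2}$ is a priori finite for each fixed $\eps$ — this is guaranteed by the regularity of $\ue$ from Lemma~\ref{lem2.1} (so that subtracting it from both sides is justified), after which the resulting bound is uniform in $\eps$.
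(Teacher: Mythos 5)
Your approach is the same as the paper's (test the first equation with $\frac{1}{\ue+1}$, apply Young's inequality to the chemotaxis term using $f_\eps(\ue)\le\ue+1$, absorb $\frac12\int_0^T\intom\frac{|\nabla\ue|^2}{(\ue+1)^2}$, integrate in time, control $\int_0^T\intom|\nabla\ve|^2$ via Lemma~\ref{lem2.3}), and almost all of it is correct. However, your treatment of the degradation term is flawed in both variants you offer.

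In the rearranged identity, the degradation term enters as $+\intom\frac{\mu\ue^\kappa}{\ue+1}$, with a \emph{positive} sign, as you yourself correctly state. Consequently, the parenthetical remark that one may ``simply drop $-\intom\frac{\mu\ue^\kappa}{\ue+1}\le 0$'' does not apply: what needs to be controlled is the nonnegative quantity $+\intom\frac{\mu\ue^\kappa}{\ue+1}$, and dropping a nonnegative summand from the right-hand side of an upper bound is not legitimate. Your primary bound is also incorrect: the inequality $\ue^{\kappa-1}\le\ue+1$ holds only for $\kappa\le 2$ (take $\ue=2$, $\kappa=3$), while \eqref{cond:kappa} is only a \emph{lower} bound on $\kappa$, so the theorem allows arbitrarily large $\kappa$. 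The fix is elementary and is what the paper does: since $\frac{1}{\ue+1}\le 1$, one has $\int_0^T\intom\frac{\mu\ue^\kappa}{\ue+1}\le\int_0^T\intom\mu\ue^\kappa$, and the right-hand side is bounded locally in time, uniformly in $\eps$, by the estimate \eqref{e218} from Lemma~\ref{lem2.2}. In particular this step does not require $\|\mu\|_{\leb\infty}$ at all. With that correction the argument goes through exactly as you describe.
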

\begin{proof}
Testing the first equation in (\ref{e210}) with $\frac{1}{u_{\varepsilon}+1}$
and integrating by parts gives
 \begin{equation}\label{e2231}
\begin{aligned} 
&\pe -\ddt \int_{\Omega} \ln \left(u_{\varepsilon}+1\right) \\
&=-\int_{\Omega} \frac{1}{u_{\varepsilon}+1} \Delta u_{\varepsilon}+\int_{\Omega} \frac{1}{u_{\varepsilon}+1} \nabla \cdot\left(f_{\varepsilon}\left(u_{\varepsilon}\right) \nabla v_{\varepsilon}\right) -\int_\Omega\frac{\lambda u_\varepsilon}{u_\varepsilon+1}+\int_\Omega\frac{\mu u_\varepsilon^\kappa}{u_\varepsilon+1}\\ 
&\le-\int_{\Omega} \frac{\left|\nabla u_{\varepsilon}\right|^{2}}{\left(u_{\varepsilon}+1\right)^{2}}+\int_{\Omega} \frac{f_{\varepsilon}\left(u_{\varepsilon}\right)}{\left(u_{\varepsilon}+1\right)^{2}} \nabla u_{\varepsilon} \cdot \nabla v_{\varepsilon} + \lambda_1 |\Omega| + \intom \frac{\mu \ue^\kappa}{\ue+1}
\qquad \text{in $(0, T)$ for all $\eps \in (0, 1)$},
\end{aligned}
\end{equation}
where again $\lambda_1 \defs \|\lambda\|_{\leb\infty}$.
By using Young’s inequality, we see that here
\begin{equation*}
\begin{aligned}
\left|\int_{\Omega} \frac{f_{\varepsilon}\left(u_{\varepsilon}\right)}{\left(u_{\varepsilon}+1\right)^{2}} \nabla u_{\varepsilon} \cdot \nabla v_{\varepsilon}\right| & \leq \frac{1}{2} \int_{\Omega} \frac{\left|\nabla u_{\varepsilon}\right|^{2}}{\left(u_{\varepsilon}+1\right)^{2}}+\frac{1}{2} \int_{\Omega} \frac{f_{\varepsilon}^{2}\left(u_{\varepsilon}\right)}{\left(u_{\varepsilon}+1\right)^{2}}\left|\nabla v_{\varepsilon}\right|^{2} \\
& \leq \frac{1}{2} \int_{\Omega} \frac{\left|\nabla u_{\varepsilon}\right|^{2}}{\left(u_{\varepsilon}+1\right)^{2}}+\frac{1}{2} \int_{\Omega}\left|\nabla v_{\varepsilon}\right|^{2}
\qquad \text{in $(0, T)$ for all $\eps \in (0, 1)$}
\end{aligned}
\end{equation*}
because $f_\varepsilon(u_\varepsilon)\leq u_\varepsilon$ in $\Omega \times (0, T)$ for all $\eps \in (0, 1)$.
As $0 \leq \ln (s+1) \leq s$ for $s \ge 0$,
upon integrating in time we thus obtain from (\ref{e2231}) that
\begin{equation*}
\begin{aligned} 
\frac{1}{2} \int_{0}^{T} \int_{\Omega} \frac{\left|\nabla u_{\varepsilon}\right|^{2}}{\left(u_{\varepsilon}+1\right)^{2}} & \leq \frac{1}{2} \int_{0}^{T} \int_{\Omega}\left|\nabla v_{\varepsilon}\right|^{2}+\int_{\Omega} \ln \left(u_{\varepsilon}(\cdot, T)+1\right)-\int_{\Omega} \ln \left(u_{0}+1\right)+ \lambda_1 T |\Omega| + \int_0^T \intom \frac{\mu \ue^\kappa}{\ue+1} \\ 
& \leq \frac{1}{2} \int_{0}^{T} \int_{\Omega}\left|\nabla v_{\varepsilon}\right|^{2}+\int_{\Omega} u_{\varepsilon}(\cdot, T) + \lambda_1 T |\Omega| + \int_0^T \intom \frac{\mu \ue^\kappa}{\ue+1}
\qquad \text { for all } \eps \in (0, 1).
\end{aligned}
\end{equation*}
An application of Lemma~\ref{lem2.3} and Lemma~\ref{lem2.2} therefore yields (\ref{e223})
for an appropriately chosen $C \gt 0$, as desired.
\end{proof}
As a last a priori estimate in our collection of these,
we prove certain bounds of the time derivates of $\ln (\ue+1)$ and $\ve$.
In conjunction with already obtained bounds,
these will allow us to apply the Aubin--Lions lemma, asserting (inter alia) a.e.\ pointwise convergence of $\ue$ and $\ve$
along certain subsequences.
\begin{lem}\label{lem2.5}
For all $T \gt 0$ there exists $C>0$ such that for each $\varepsilon\in (0,1)$,
\begin{equation}\label{e224}
\int_{0}^{T}\|\partial_{t}\ln(u_{\varepsilon}(\cdot,t)+1)\|_{(W^{n,2}(\Omega))^{*}}\dt\leq C
\end{equation}
and
\begin{equation}\label{e225}
\int_{0}^{T}\|v_{\varepsilon t}(\cdot,t)\|_{(W^{n,2}(\Omega))^{*}}\dt\leq C.
\end{equation}
\end{lem}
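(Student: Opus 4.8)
The plan is to test the two equations in~\eqref{e210} against an arbitrary $\psi \in W^{n,2}(\Omega)$, integrate by parts, and bound each resulting term by $\|\psi\|_{W^{n,2}(\Omega)}$ times a quantity which is integrable in time uniformly in $\eps$. Since $n \ge 2$, the Sobolev embedding $W^{n,2}(\Omega) \embed L^\infty(\Omega)$ is at our disposal (together with $W^{n,2}(\Omega) \embed W^{1,\infty}(\Omega)$ when needed for gradient terms), and this is the main reason for testing against $W^{n,2}$ rather than $W^{1,2}$: it lets us absorb the low-regularity nonlinear terms cheaply.

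For~\eqref{e224}, I would start from $\partial_t \ln(\ue+1) = \frac{\Delta \ue}{\ue+1} - \frac{\nabla\cdot(f_\eps(\ue)\nabla\ve)}{\ue+1} + \frac{\lambda\ue}{\ue+1} - \frac{\mu\ue^\kappa}{\ue+1}$, and after pairing with $\psi$ and integrating by parts rewrite the first two contributions as
\begin{align*}
  \intom \frac{\Delta\ue}{\ue+1}\psi
  &= -\intom \frac{\nabla\ue\cdot\nabla\psi}{\ue+1} + \intom \frac{|\nabla\ue|^2}{(\ue+1)^2}\psi, \\
  \intom \frac{\nabla\cdot(f_\eps(\ue)\nabla\ve)}{\ue+1}\psi
  &= -\intom \frac{f_\eps(\ue)}{\ue+1}\nabla\ve\cdot\nabla\psi + \intom \frac{f_\eps(\ue)}{(\ue+1)^2}(\nabla\ue\cdot\nabla\ve)\psi.
\end{align*}
Using $\frac{f_\eps(\ue)}{\ue+1}\le 1$, $\frac{|\nabla\ue|}{\ue+1} = |\nabla\ln(\ue+1)|$, $\frac{\lambda\ue}{\ue+1}\le\lambda_1$, and Young's inequality on the mixed term $\frac{f_\eps(\ue)}{(\ue+1)^2}|\nabla\ue||\nabla\ve| \le \frac12\frac{|\nabla\ue|^2}{(\ue+1)^2} + \frac12|\nabla\ve|^2$, every term is estimated by $\|\psi\|_{W^{n,2}(\Omega)}$ times one of the quantities $\frac{|\nabla\ue|^2}{(\ue+1)^2}$, $|\nabla\ln(\ue+1)|$ (controlled in $L^2$, hence in $L^1$, by the square bound), $|\nabla\ve|^2$, $1$, or $\frac{\mu\ue^\kappa}{\ue+1}\le\mu\ue^\kappa$. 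Taking the supremum over $\|\psi\|_{W^{n,2}(\Omega)}\le 1$ and integrating over $(0,T)$, the right-hand side is finite uniformly in $\eps$ by Lemma~\ref{lem2.4} (for the $\frac{|\nabla\ue|^2}{(\ue+1)^2}$ and thus also the $|\nabla\ln(\ue+1)|$ terms), Lemma~\ref{lem2.3} (for $\int_0^T\intom|\nabla\ve|^2$), and Lemma~\ref{lem2.2} (for $\int_0^T\intom\mu\ue^\kappa$).

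For~\eqref{e225} the argument is entirely analogous but shorter: pairing $\vet = \Delta\ve - \ve + \ue$ with $\psi$ and integrating by parts gives $\intom\vet\psi = -\intom\nabla\ve\cdot\nabla\psi - \intom\ve\psi + \intom\ue\psi$, which is bounded by $\|\psi\|_{W^{n,2}(\Omega)}$ times $\big(\|\nabla\ve\|_{L^2(\Omega)} + \|\ve\|_{L^1(\Omega)} + \|\ue\|_{L^1(\Omega)}\big)$; integrating in time, the first summand is controlled by Lemma~\ref{lem2.3} via Cauchy--Schwarz in $t$ (an $L^2$-in-time bound implies an $L^1$-in-time bound on $(0,T)$) and the latter two by the $L^\infty$-$L^1$ bounds~\eqref{e217} and~\eqref{e219} of Lemma~\ref{lem2.2}. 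I do not anticipate a serious obstacle here; the only point requiring a little care is bookkeeping the mixed chemotaxis term in~\eqref{e224} so that it is genuinely absorbed into the already-established space-time bounds rather than producing a term like $\intom\frac{f_\eps^2(\ue)}{(\ue+1)^2}|\nabla\ve|^2$ that would need $\ue$ in better spaces — and here the cancellation $\frac{f_\eps(\ue)}{\ue+1}\le 1$ together with Young's inequality is exactly what makes it work.
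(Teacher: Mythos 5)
Your proposal is correct and follows essentially the same route as the paper: test the first equation against $\psi/(\ue+1)$, integrate by parts, control the chemotaxis term via $\frac{f_\eps(\ue)}{\ue+1}\le 1$ together with Young's inequality, and close with Lemmas~\ref{lem2.2}--\ref{lem2.4}; and for~\eqref{e225} test the second equation against $\psi$ directly, using the $L^2_{\loc}$-$L^2$ bound for $\nabla\ve$ and the $L^\infty_{\loc}$-$L^1$ bounds for $\ue$ and $\ve$.

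One small inaccuracy: you invoke $W^{n,2}(\Omega)\embed W^{1,\infty}(\Omega)$ ``when needed for gradient terms''. That embedding is not actually available in the borderline case $n=2$ (where $W^{2,2}\not\embed W^{1,\infty}$), but fortunately you never need it --- inspection of your own estimates shows that $\nabla\psi$ only ever appears in $L^2$-pairings, so the embeddings $W^{n,2}\embed W^{1,2}$ and $W^{n,2}\embed L^\infty$ (both valid for all $n\ge 2$, and exactly the ones the paper uses) suffice. With that clause deleted the argument is clean.
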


\begin{proof}
For arbitrary $t>0$ and $\psi\in C^{\infty}(\Ombar)$, multiplying the first equation in $(\ref{e210})$ by $\frac{\psi}{u_{\varepsilon}(\cdot, t)+1}$ and using Young's inequality as well as the Cauchy--Schwarz inequality, we see that
\begin{align*}
  &\pe  {\left|\int_{\Omega} \partial_{t} \ln \left(u_{\varepsilon}(\cdot, t)+1\right) \cdot \psi\right|} \\
  &=    \left|
          \intom \frac{|\nabla \ue|^2 \psi}{(\ue+1)^2}
          - \intom \frac{\nabla \ue \cdot \nabla \psi}{\ue+1}
          - \intom \frac{f_\eps(\ue) \nabla \ue \cdot \nabla \ve \psi}{(\ue+1)^2}
          + \intom \frac{f_\eps(\ue) \nabla \ve \cdot \nabla \psi}{\ue+1}
          + \intom \frac{\lambda \ue \psi}{\ue+1}
          - \intom \frac{\mu \ue^\kappa \psi}{\ue+1}
        \right|\\
  &\le  \left\{
          2 \intom \frac{|\nabla \ue|^2}{(\ue+1)^2}
          + \intom |\nabla \ve|^2
          + 1
          + \lambda_1 |\Omega|
          + \intom \mu \ue^\kappa
        \right\}
        \cdot\Big\{\|\nabla \psi\|_{L^{2}(\Omega)}+\|\psi\|_{L^{\infty}(\Omega)} \Big\}
\end{align*}
for all $\eps \in (0, 1)$,
where $\lambda_1 \defs \|\lambda\|_{\leb\infty}$.

Noting that $\sob n2 \embed \sob12$ and $\sob n2 \embed \leb\infty$,
we can fix $c_1>0$ such that 
\begin{align*}
    \|\nabla \psi\|_{L^{2}(\Omega)}+\|\psi\|_{L^{\infty}(\Omega)} \le c_1 \|\psi\|_{\sob n2}
    \qquad \text{for all $\psi \in \sob n2$}
\end{align*}
and obtain
\begin{align*}
    {\left\|\partial_{t} \ln \left(u_{\varepsilon}(\cdot, t)+1\right)\right\|_{\left(W^{n,2}(\Omega)\right)^*}}
&\leq c_{1} \cdot\left\{2 \int_{\Omega} \frac{\left|\nabla u_{\varepsilon}\right|^{2}}{\left(u_{\varepsilon}+1\right)^{2}}+\int_{\Omega}\left|\nabla v_{\varepsilon}\right|^{2}+1+\lambda_1 |\Omega| +\int_\Omega \mu u_\varepsilon^\kappa\right\}
\end{align*}
for all $\eps \in (0, 1)$, $t \in (0, T)$ and $\psi \in \con\infty$.
Thus, after an integration in time and as $\con\infty$ is dense in $\sob n2$,
from Lemma~\ref{lem2.4}, Lemma~\ref{lem2.3} and Lemma~\ref{lem2.2}, we infer~\eqref{e224}.

Similarly, by testing the second equation in $(\ref{e210})$ with $\psi \in \con\infty$ and using H\"older's inequality, we have
\begin{equation*}
\begin{split}
\left|\int_{\Omega}v_{\varepsilon t}(\cdot,t)\psi dx\right|&=\left|-\int_{\Omega}\nabla v_\varepsilon\cdot\nabla\psi-\int_{\Omega}v_\varepsilon\psi+\int_{\Omega}u_\varepsilon\psi\right|\\
&\leq\left\{\left\{\int_\Omega|\nabla v_\varepsilon|^{2}\right\}^{\frac{1}{2}}+\int_{\Omega}v_\varepsilon+\int_{\Omega}u_\varepsilon\right\}\cdot \Big\{\|\nabla \psi\|_{L^2(\Omega)}+\|\psi\|_{L^{\infty}(\Omega)}\Big\}\\
&\leq\left\{\int_\Omega|\nabla v_\varepsilon|^{2}+\frac{1}{4}+\int_{\Omega}v_\varepsilon+\int_{\Omega}u_\varepsilon\right\}\cdot c_1\|\psi\|_{W^{n,2}(\Omega)}
\qquad \text{for all $\eps \in (0, 1)$ and $t \in (0, T)$}.
\end{split}
\end{equation*}
As a consequence,
\begin{equation*}
\|v_{\varepsilon t}(\cdot,t)\|_{(W^{n,2}(\Omega))^*}\leq c_1\left\{\int_\Omega|\nabla v_\varepsilon|^{2}+\frac14+\int_{\Omega}v_\varepsilon+\int_{\Omega}u_\varepsilon\right\}
\qquad \text{for all $\eps \in (0, 1)$ and $t \in (0, T)$},
\end{equation*}
so that $(\ref{e225})$ results from Lemma~\ref{lem2.2} and Lemma~\ref{lem2.3}.
\end{proof}
We now combine the a priori estimates gained in this section with well-known compactness theorems
in order to obtain a solution candidate $(u, v)$ as the limit of $(\ue, \ve)$ in certain topologies.
At the end of the succeeding section, we will then prove that a pair $(u, v)$ constructed in this way
is indeed a global generalized solution of \eqref{e11}.
\begin{lem}\label{lem2.6}
There exist nonnegative functions $u$ and $v$ defined on $\Omega\times (0,\infty)$,
$\tilde p, \tilde q, \tilde \kappa, \tilde \gamma \gt 1$
with
\begin{align}\label{eq:conv:tilde}
  \tilde q = \frac{\tilde p}{\tilde p - 1}
  \quad \text{and} \quad
  \tilde \gamma = \frac{\tilde \kappa}{\tilde \kappa - 1}
\end{align}
as well as a sequence $(\varepsilon_{k})_{k\in\mathbb{N}}\subset(0,1)$ such that $\varepsilon_{k}\searrow 0$ as $k\to\infty$ and 
\begin{alignat}{2}
u_{\varepsilon} &\to u                                                     \qquad&& \text{in } L_{\loc}^{\tilde \kappa}([0, \infty); \leb {\tilde p}) \text{ and a.e.\ in } \Omega\times(0,\infty),\label{e228}\\
\ue(\cdot, t) &\to u(\cdot, t)                                             \qquad&& \text{in $\leb1$ for a.e.\ $t \gt 0$} \label{e229}\\
\ln \left(u_{\varepsilon}+1\right) &\rightharpoonup \ln (u+1)              \qquad&& \text{in }L_{\loc}^2([0,\infty);W^{1,2}(\Omega)),\label{e230}\\
\mu^\frac1\kappa \ue &\rightharpoonup \mu^\frac1\kappa u                   \qquad&& \text{in } L_{\loc}^\kappa(\ol{\Omega}\times [0,\infty)),\label{e234}\\
v_{\varepsilon} &\to v                                                     \qquad&& \text{in } L_{\loc}^{\tilde \gamma}([0, \infty); \leb {\tilde q} \text{ and a.e.\ in } \Omega\times(0,\infty),\label{e231}\\
\nabla v_{\varepsilon}&\rightharpoonup \nabla v                            \qquad&& \text{in } L_{\loc}^2(\ol{\Omega}\times[0,\infty)),\label{e232}\\
\ue \ve &\ra uv                                                            \qquad&& \text{in } L_{\loc}^1(\Ombar \times [0, \infty)).\label{eq:conv:uv}
\end{alignat}
as $\varepsilon=\varepsilon_{k}\searrow 0$.
Moreover, the pair $(u,v)$ has the properties \eqref{e21}--\eqref{e23} in Definition~\ref{defi21}.
\end{lem}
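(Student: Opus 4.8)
The strategy is the standard one for this type of result: assemble the $\eps$-uniform bounds from Section~3 and extract convergent subsequences via Banach--Alaoglu and the Aubin--Lions lemma, then upgrade weak convergence to a.e.\ pointwise convergence where needed. First I would fix $p, q, \gamma$ as provided by Lemma~\ref{lem2.3} and set $\tilde p \defs p$, $\tilde\kappa \defs \kappa$, $\tilde q \defs \frac{p}{p-1} = p'$, $\tilde\gamma \defs \frac{\kappa}{\kappa-1} = \kappa'$; note that $q \gt p' = \tilde q$ and $\gamma \gt \kappa' = \tilde\gamma$, so~\eqref{eq:conv:tilde} holds by construction and any bound in $L^\gamma_{\loc}([0,\infty); L^q)$ also gives one in $L^{\tilde\gamma}_{\loc}([0,\infty); L^{\tilde q})$ after Hölder on bounded time intervals. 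The key inputs are: boundedness of $(\ln(\ue+1))_\eps$ in $L^2_{\loc}([0,\infty); W^{1,2}(\Omega))$ from Lemma~\ref{lem2.4} together with the $L^\infty_{\loc}$-$L^1$ bound~\eqref{e217} (which controls $\|\ln(\ue+1)\|_{L^1}$); boundedness of $(\partial_t \ln(\ue+1))_\eps$ in $L^1_{\loc}([0,\infty); (W^{n,2}(\Omega))^*)$ from Lemma~\ref{lem2.5}; boundedness of $(\ve)_\eps$ in $L^\gamma_{\loc}([0,\infty); L^q(\Omega)) \cap L^2_{\loc}([0,\infty); W^{1,2}(\Omega))$ and of $(\vet)_\eps$ in $L^1_{\loc}([0,\infty); (W^{n,2}(\Omega))^*)$ from Lemmas~\ref{lem2.3} and~\ref{lem2.5}; and the $L^\kappa_{\loc}$-$L^p$ bound~\eqref{eq:u_l_kappa_l_p:est}, which combined with $\intom \mu^{-s} \lt \infty$ gives boundedness of $(\mu^{1/\kappa}\ue)_\eps$ in $L^\kappa_{\loc}(\Ombar \times [0,\infty))$.

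Concretely, the Aubin--Lions lemma applied with the Gelfand triple $W^{1,2}(\Omega) \embed\embed L^2(\Omega) \embed (W^{n,2}(\Omega))^*$ (the first embedding compact since $\Omega$ is bounded and smooth) yields, along a first subsequence $\eps = \eps_k \searrow 0$, strong convergence $\ln(\ue+1) \to z$ in $L^2_{\loc}(\Ombar \times [0,\infty))$ and a.e.; setting $u \defs \mathrm{e}^z - 1 \ge 0$ gives $\ue \to u$ a.e.\ in $\Omega \times (0,\infty)$, and since $t \mapsto \ln(s+1)$ is a homeomorphism this also identifies~\eqref{e230} (the weak $W^{1,2}$ limit must be $\ln(u+1) = z$) and, via Egorov plus the $L^\kappa_{\loc}$-$L^p$ bound and Vitali's theorem, the strong convergence~\eqref{e228} in $L^{\tilde\kappa}_{\loc}([0,\infty); L^{\tilde p}(\Omega))$. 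For~\eqref{e229}, on a.e.\ time slice $\ue(\cdot,t) \to u(\cdot,t)$ a.e.\ in $\Omega$ with a uniform-in-$t$ bound in $L^p(\Omega)$, $p \gt 1$, so Vitali gives $L^1$ convergence; alternatively one invokes the local-in-time mass bound. A second extraction handles $v$: Aubin--Lions again (now with $(\ve)_\eps$ bounded in $L^2_{\loc}([0,\infty); W^{1,2})$ and $(\vet)_\eps$ in $L^1_{\loc}([0,\infty); (W^{n,2})^*)$) gives $\ve \to v$ in $L^2_{\loc}(\Ombar \times [0,\infty))$ and a.e.; the $L^\gamma_{\loc}$-$L^q$ bound upgrades this to~\eqref{e231} via Vitali, Banach--Alaoglu gives the weak gradient convergence~\eqref{e232} (limit identified as $\nabla v$ by uniqueness of distributional derivatives), and $v \ge 0$ follows from the pointwise limit. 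For~\eqref{e234}: $\mu^{1/\kappa}\ue \to \mu^{1/\kappa}u$ a.e.\ (using~\eqref{e228}) and the sequence is $L^\kappa_{\loc}$-bounded with $\kappa \gt 1$, so weak $L^\kappa_{\loc}$ convergence follows (again Vitali, or weak compactness plus a.e.\ identification of the limit). Finally~\eqref{eq:conv:uv}: $\ue\ve \to uv$ a.e., and $(\ue\ve)_\eps$ is bounded in $L^\delta_{\loc}(\Ombar \times [0,\infty))$ for some $\delta \gt 1$ because $\ue$ is bounded in $L^\kappa_{\loc}([0,\infty); L^p)$ and $\ve$ in $L^\gamma_{\loc}([0,\infty); L^q)$ with $q \gt p'$, $\gamma \gt \kappa'$, so the exponents $(\frac1\kappa+\frac1\gamma, \frac1p+\frac1q)$ are both strictly below $1$ and Hölder gives equi-integrability; Vitali then yields $L^1_{\loc}$ convergence. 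The properties~\eqref{e21}--\eqref{e23} in Definition~\ref{defi21} are read off directly: \eqref{e21} from $u \in L^1_{\loc}$ and $v \in L^2_{\loc}([0,\infty); W^{1,2})$ (lower semicontinuity of norms under weak convergence), \eqref{e22} from the a.e.\ limits, and \eqref{e23} from~\eqref{e230}.

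The one genuinely delicate point is ensuring \textbf{one common subsequence} serves all seven convergences simultaneously and that the limit objects are mutually consistent --- e.g.\ that the function $u$ appearing in~\eqref{e228}, the base of the exponential in~\eqref{e230}, and the factor in~\eqref{e234} and~\eqref{eq:conv:uv} are literally the same, and likewise for $v$. This is handled by performing the extractions sequentially (first the $\ln(\ue+1)$ subsequence, then a further subsequence for $\ve$), defining $u$ and $v$ once via the a.e.\ limits, and thereafter only passing to further subsequences; uniqueness of a.e.\ and distributional limits then forces every weak limit to coincide with the already-named object. I would also remark that since the bounds in Lemmas~\ref{lem2.2}--\ref{lem2.5} are on each fixed interval $(0,T)$, a diagonal argument over $T = T_j \nea \infty$ produces a single sequence $\eps_k \searrow 0$ working on all of $[0,\infty)$. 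No real obstacle remains beyond this bookkeeping --- every analytic ingredient is already in place from Section~3.
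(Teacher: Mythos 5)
Your overall structure (Aubin--Lions for $\ln(u_\eps+1)$ and $v_\eps$, a.e.\ convergence, Vitali to upgrade, weak compactness for gradients) is the same as the paper's, but there is a genuine error in the choice of exponents that invalidates the step where you claim~\eqref{e228}. You set $\tilde p \defs p$ and $\tilde\kappa \defs \kappa$, and then invoke ``Egorov plus the $L^\kappa_{\loc}$-$L^p$ bound and Vitali's theorem'' to conclude $u_\eps \to u$ strongly in $L^{\kappa}_{\loc}([0,\infty); L^{p}(\Omega))$. This cannot work: a.e.\ convergence plus boundedness in $L^\kappa L^p$ does \emph{not} give uniform integrability at the level of $L^\kappa L^p$, and hence Vitali does not apply (compare $f_j = j \mathbf 1_{(0,1/j)}$, which converges a.e.\ to $0$ and is bounded in $L^1(0,1)$ but does not converge in $L^1(0,1)$). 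You need to shave room off: take $\tilde p \in (1, p)$ and $\tilde\kappa \in (1, \kappa)$ strictly smaller, so that the $L^\kappa L^p$ bound becomes an equi-integrability statement in $L^{\tilde\kappa} L^{\tilde p}$. This is exactly what the paper does, and the hypothesis $q \gt p'$, $\gamma \gt \kappa'$ supplied by Lemma~\ref{lem2.3} is precisely what guarantees one can simultaneously keep the conjugacy relations in~\eqref{eq:conv:tilde}: choosing $\tilde p \in \bigl(\frac{q}{q-1}, p\bigr)$ (nonempty because $q > p'$) forces $\tilde q = \frac{\tilde p}{\tilde p-1} \in (p', q)$, and likewise for $\tilde\kappa$, $\tilde\gamma$.

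A secondary error: to justify~\eqref{e229} you assert that $u_\eps(\cdot,t)$ carries ``a uniform-in-$t$ bound in $L^p(\Omega)$''. That is not what~\eqref{eq:u_l_kappa_l_p:est} gives --- it is a time-\emph{integrated} bound $\int_0^T \bigl(\intom u_\eps^p\bigr)^{\kappa/p} \lt C$, so individual time slices need not be controlled. The correct route, taken in the paper, is to derive~\eqref{e229} as a consequence of the already established strong convergence~\eqref{e228}: convergence in $L^{\tilde\kappa}((0,T); L^{\tilde p}(\Omega))$ implies, along a further subsequence, $u_\eps(\cdot,t) \to u(\cdot,t)$ in $L^{\tilde p}(\Omega) \hookrightarrow L^1(\Omega)$ for a.e.\ $t$. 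Your handling of~\eqref{eq:conv:uv} via a higher-integrability bound on the product $u_\eps v_\eps$ is fine in principle, and the remainder (subsequence bookkeeping, diagonalization, lower semicontinuity to get~\eqref{e21}--\eqref{e23}) matches the paper, but the two points above must be corrected for the proof to stand.
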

\begin{proof}
  From Lemma~\ref{lem2.3}, Lemma~\ref{lem2.4} and Lemma~\ref{lem2.5},
  a sequence $(\eps_k)_{k \in \N}$ with $\eps_k \ra 0$
  and $\ln(u_{\eps_k} +1) \ra z$ as well as $v_{\eps_k} \ra v$ in $L_{\loc}^2(\Ombar \times [0, \infty))$ as $k \ra \infty$
  for certain $z, v \in L_{\loc}^2(\Ombar \times [0, \infty))$
  can be obtained through direct applications of the Aubin--Lions lemma (combined with a diagonalization argument).
  Along a further subsequence, which we also denote by $(\eps_k)_{k \in \N}$ for convenience,
  we also have $\ln(u_{\eps_k} +1) \ra z$ a.e.\ (in $\Omega \times (0, \infty)$) as well as $v_{\eps_k} \ra v$ a.e.\ as $k \ra \infty$,
  which in turn implies $z, v \ge 0$ a.e.\ due to Lemma~\ref{lem2.1}.
  Moreover, as $\ln(\cdot+1)$ is a homeomorphism on $[0, \infty)$, 
  we conclude $u_{\eps_k} \ra \ure^{z} - 1 \sfed u \ge 0$ a.e.\ as $k \ra \infty$, thus \eqref{e22} holds.

  Let $p, q, \gamma \gt 1$ be as in Lemma~\ref{lem2.3}.
  In particular, $q \gt \frac{p}{p-1}$ and $\gamma \gt \frac{\kappa}{\kappa-1}$,
  hence we may fix $\tilde p \in (1, p), \tilde q \in (1, q), \tilde \gamma \in (1, \gamma), \tilde \kappa \in (1, \kappa)$
  such that \eqref{eq:conv:tilde} holds.
  Then Vitali's theorem and \eqref{eq:ddt_v2:est} assert \eqref{e228} and \eqref{e231},
  while \eqref{eq:conv:uv} is a direct consequence thereof and of Hölder's inequality.
  Upon extracting a further subsequence, if necessary, we also obtain \eqref{e229} from \eqref{e228}.

  As pointwise convergence (almost everywhere) has already been shown,
  the weak convergences in~\eqref{e230}, \eqref{e234} and \eqref{e232}
  are consequences of the estimates \eqref{e223}, \eqref{e218} and \eqref{eq:ddt_v2:est}, respectively.

  Finally, \eqref{e21} follows from \eqref{e228}, \eqref{e231} and \eqref{e232},
  while \eqref{e230} implies \eqref{e23}.
\end{proof}

\section{Proof of Theorem~\ref{main_result}}\label{sec:proof}
The goal of this section is to prove Theorem~\ref{main_result};
that is, we will show that the pair $(u, v)$ constructed in Lemma~\ref{lem2.6} is a generalized solution of \eqref{e11}.
While Lemma~\ref{lem2.6} already asserts that all integrals in \eqref{e24}, \eqref{e25} and \eqref{e26} exist,
the convergence statements in Lemma~\ref{lem2.6} are yet insufficient
to obtain
\begin{align*}
-\int_{0}^{\infty} \int_{\Omega} \frac{f_{\varepsilon}\left(u_{\varepsilon}\right)}{u_{\varepsilon}+1}\left(\nabla \ln \left(u_{\varepsilon}+1\right) \cdot \nabla v_{\varepsilon}\right) \varphi
  \ra -\int_0^\infty \intom \frac{u}{u+1} (\nabla \ln(u+1) \cdot \nabla v) \varphi,
\end{align*}
along some sequence null sequence $(\eps_k)_{k \in \N}$.
Thus, as a last preparation, we now proceed to obtain strong convergence of
$(\nabla \ve)_{\eps \in (0, 1)}$ in $L_{\loc}^2(\Ombar \times [0, \infty))$ along a suitable sequence.

\begin{lem}\label{lem2.9}
Let $v$ and $(\varepsilon_{k})_{k\in\mathbb{N}}$ be as in Lemma~\ref{lem2.6}. Then, for each $T>0$, we have
\begin{equation}\label{nablavl2}
\nabla v_{\varepsilon}\to\nabla v \qquad \text{in $L^{2}(\Omega\times(0,T))$  as $\varepsilon=\varepsilon_{k}\searrow0$}.
\end{equation}
\end{lem}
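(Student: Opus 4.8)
The plan is to use the standard strategy for upgrading weak convergence of gradients to strong convergence: exploit a compatible energy (in)equality for the $v$-equation. First I would fix $T>0$ and test the second equation in \eqref{e210} with $\ve$ to obtain, after integrating in time on $(0,T)$, the identity
\[
  \tfrac12 \intom \ve^2(\cdot, T) + \int_0^T \intom |\nabla \ve|^2 + \int_0^T \intom \ve^2
  = \tfrac12 \intom v_{\eps 0}^2 + \int_0^T \intom \ue \ve,
\]
which is already contained in \eqref{e221}. On the limit side, since $v$ is a weak solution of the second equation (as will be verified; in any case \eqref{e26} holds for $(u,v)$), a corresponding (weak) energy identity holds,
\[
  \tfrac12 \intom v^2(\cdot, T) + \int_0^T \intom |\nabla v|^2 + \int_0^T \intom v^2
  = \tfrac12 \intom v_0^2 + \int_0^T \intom u v,
\]
valid for a.e.\ $T>0$; this is a routine consequence of $v \in L^2_{\loc}([0,\infty); W^{1,2}(\Omega))$ together with the space-time bound on $uv$ from \eqref{eq:ddt_v2:est}, which guarantees enough regularity of $v_t$ to justify testing with $v$ itself (via a Steklov-averaging or density argument).

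Next I would pass to the limit $\eps = \eps_k \searrow 0$ in the approximate energy identity. The initial-data term converges by \eqref{eq:conv_init}, and the production term $\int_0^T \intom \ue \ve \ra \int_0^T \intom uv$ by \eqref{eq:conv:uv}. For the term $\intom \ve^2(\cdot, T)$ I would use the a.e.\ convergence \eqref{e231} together with equi-integrability coming from the $L^{\tilde\gamma}_{\loc}([0,\infty); L^{\tilde q})$ bound (with $\tilde q > 1$) to get, via Vitali, convergence of $\intom \ve^2(\cdot, t)$ in $L^1_{\loc}$, hence along a further subsequence (of $T$'s, or for a.e.\ $T$) $\intom \ve^2(\cdot, T) \ra \intom v^2(\cdot, T)$; alternatively, weak lower semicontinuity gives $\liminf \ge$, which suffices here. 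Likewise $\int_0^T \intom \ve^2 \ra \int_0^T \intom v^2$. Combining with \eqref{e232} and weak lower semicontinuity of the $L^2$ norm, $\int_0^T \intom |\nabla v|^2 \le \liminf_{k} \int_0^T \intom |\nabla \ve|^2$, the energy identities force
\[
  \limsup_{k\to\infty} \int_0^T \intom |\nabla \ve|^2 \le \int_0^T \intom |\nabla v|^2,
\]
so in fact $\int_0^T \intom |\nabla \ve|^2 \ra \int_0^T \intom |\nabla v|^2$. Together with the weak convergence \eqref{e232}, convergence of norms in the Hilbert space $L^2(\Omega\times(0,T))$ yields the strong convergence \eqref{nablavl2}.

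The main obstacle I anticipate is rigorously justifying the limit energy identity for $v$: one must know that $v$ is regular enough in time to be an admissible test function for its own equation. This requires combining $v \in L^2_{\loc}([0,\infty);W^{1,2}(\Omega))$ with a bound on $v_t$ — which follows from the equation $v_t = \Delta v - v + u$ once $u \in L^p_{\loc}$ with $p>1$ (available from Lemma~\ref{lem2.3} and \eqref{e228}) and $\Delta v \in L^2_{\loc}([0,\infty); (W^{1,2}(\Omega))^*)$ — and then invoking a standard lemma (e.g.\ as in Lions--Magenes or the Aubin--Lions framework) that permits testing with $v$ itself, producing $\ddt \frac12 \|v\|_{L^2}^2 = \langle v_t, v\rangle$. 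A secondary technical point is handling the boundary term $\intom \ve^2(\cdot, T)$: since the convergence in \eqref{e231} is only for a.e.\ $t$, the cleanest route is to argue the gradient convergence first for a.e.\ $T$ and then note that the $L^2(\Omega\times(0,T))$ statement for every $T$ follows by monotonicity in $T$ of $\int_0^T\intom|\nabla\ve - \nabla v|^2$ together with its convergence to $0$ along a set of $T$'s of full measure accumulating at every point. These are standard manipulations; the conceptual content is entirely the energy-identity comparison.
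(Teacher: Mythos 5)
Your proposal is correct in spirit and reaches the same conclusion, but it takes a genuinely different route from the paper. You propose to pass to the limit in the energy identity for $\ve$ alone, compare it to an energy identity for the limit $v$, and combine the resulting $\limsup$ bound with weak lower semicontinuity to upgrade $\nabla\ve\rh\nabla v$ to norm convergence and hence strong $L^2$ convergence. The paper instead works entirely at the level of the smooth approximants: for $\eps, \eps'\in(0,1)$ it observes that $\ve-\vep$ solves the linear heat-type problem $(\ve-\vep)_t=\Delta(\ve-\vep)-(\ve-\vep)+(\ue-\uep)$, tests with $\ve-\vep$ (a step that needs no justification since both functions are classical solutions), fixes $\eps$ and sends $\eps'=\eps_j\sea 0$, applies weak lower semicontinuity of the $L^2$ norm (using $\nabla\vej\rh\nabla v$) to obtain
\begin{equation*}
\int_0^T\intom|\nabla\ve-\nabla v|^2\le\tfrac12\intom(v_{\eps 0}-v_0)^2+\int_0^T\intom(\ue-u)(\ve-v),
\end{equation*}
and then lets $\eps=\eps_k\sea 0$, controlling the last integral with Hölder's inequality in the conjugate pair $L^{\tilde\kappa}((0,T);\leb{\tilde p})$ and $L^{\tilde\gamma}((0,T);\leb{\tilde q})$ by means of \eqref{e228}, \eqref{e231} and \eqref{eq:conv_init}.

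What the paper's approach buys is precisely the avoidance of the hurdle you correctly flag as the main obstacle: establishing the energy identity for the limit $v$ is not free. You would need a Lions--Magenes-type chain rule $\frac{\mathrm{d}}{\mathrm{d}t}\tfrac12\|v\|_{\leb2}^2=\langle v_t,v\rangle$, which in the standard formulation asks for $v_t\in L^2_{\loc}([0,\infty);(W^{1,2}(\Omega))^*)$ dual to $v\in L^2_{\loc}([0,\infty);W^{1,2}(\Omega))$. The bound on $v_{\eps t}$ available from Lemma~\ref{lem2.5} only yields $v_t\in L^1_{\loc}([0,\infty);(W^{n,2}(\Omega))^*)$, which is far too weak, and reading regularity off the equation $v_t=\Delta v-v+u$ gives $\Delta v-v\in L^2_{\loc}((W^{1,2})^*)$ but places $u$ only in $L^{\tilde\kappa}_{\loc}\leb{\tilde p}$ with $\tilde\kappa$ possibly strictly less than $2$, so the standard lemma does not apply directly. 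A Steklov-averaging argument could be pushed through, and your remaining manipulations (treating the boundary term $\intom\ve^2(\cdot,T)$ for a.e.\ $T$, or relying only on $\liminf\ge$) are fine, but the Cauchy-type comparison of two smooth approximants sidesteps all of this and is both shorter and more robust.
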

\begin{proof}
We follow \cite[Lemma~4.5]{WinklerRoleSuperlinearDamping2019}.
For $\eps, \eps^\prime \in (0, 1)$, Lemma~\ref{lem2.1} asserts that $\ve - \vep$ solves
\begin{align*}
  (\ve - \vep)_t = \Delta (\ve - \vep) - (\ve - \vep) + (\ue + \uep)
  \qquad \text{in $\Omega \times (0, \infty)$},
\end{align*}
so that testing with $\ve - \vep$ leads to
\begin{equation*}
\begin{aligned} 
&\pe\frac{1}{2} \int_{\Omega}\left(v_{\varepsilon}(\cdot, T)-v_{\varepsilon^{\prime}}(\cdot, T)\right)^{2}+\int_{0}^{T} \int_{\Omega}\left|\nabla v_{\varepsilon}-\nabla v_{\varepsilon^{\prime}}\right|^{2}+\int_{0}^{T} \int_{\Omega}\left(v_{\varepsilon}-v_{\varepsilon^{\prime}}\right)^{2} & \\
&= \frac{1}{2} \int_{\Omega}\left(v_{0 \varepsilon}-v_{0 \varepsilon^{\prime}}\right)^{2}+\int_{0}^{T} \int_{\Omega}\left(u_{\varepsilon}-u_{\varepsilon^{\prime}}\right)\left(v_{\varepsilon}-v_{\varepsilon^{\prime}}\right)
\qquad \text{for all $\eps, \eps^\prime \in (0, 1)$}.
\end{aligned}
\end{equation*}
In view of \eqref{e232}, \eqref{eq:conv_init}, \eqref{e231} and \eqref{eq:conv:uv},
we hence infer by using H\"older's inequality,
\begin{equation*}
\begin{aligned} 
&\pe\int_{0}^{T} \int_{\Omega}\left|\nabla v_{\varepsilon}-\nabla v\right|^{2}\\
& \leq \liminf _{j \ra \infty} \int_{\Omega}\left|\nabla v_{\varepsilon}-\nabla v_{\eps_j}\right|^{2} \\
& \leq \liminf _{j \ra \infty}\left\{\frac{1}{2} \int_{\Omega}\left(v_{0 \varepsilon}-v_{0 \eps_j}\right)^{2}+\int_{0}^{T} \int_{\Omega}\left(u_{\varepsilon}-u_{\varepsilon_j}\right)\left(v_{\varepsilon}-v_{\varepsilon_j}\right)\right\} \\
& =\frac{1}{2} \int_{\Omega}\left(v_{0 \varepsilon}-v_{0}\right)^{2}+\int_{0}^{T} \int_{\Omega}\left(u_{\varepsilon}-u\right)\left(v_{\varepsilon}-v\right)\\
&\leq \frac{1}{2} \int_{0}\left(v_{0 \varepsilon}-v_{0}\right)^{2}
+\left\|u_{\varepsilon}-u\right\|_{L^{\tilde \kappa}((0, T); \leb{\tilde p})}
 \left\|v_{\varepsilon}-v\right\|_{L^{\tilde \gamma}((0, T); \leb{\tilde q})}
 \qquad \text{for all $\eps \in (0, 1)$},
\end{aligned}
\end{equation*}
where $\tilde p, \tilde q, \tilde \kappa$ and $\tilde \gamma$ are as in Lemma~\ref{lem2.6}.
Thus, \eqref{nablavl2} is a consequence of \eqref{eq:conv_init}, \eqref{e228} and \eqref{e231}.
\end{proof}

With the above lemma at hand, we can now prove Theorem~\ref{main_result}.

\begin{proof}[Proof of Theorem~\ref{main_result}]
Let $(u, v)$ and $(\eps_k)_{k \in \N}$ be as in Lemma~\ref{lem2.6}.
To prove that $(u,v)$ is a global generalized solution in the sense of Definition~\ref{defi21}, it is sufficient to show that (\ref{e24}), (\ref{e25}) and (\ref{e26}) hold,
since validity of \eqref{e21}--\eqref{e23} has already been asserted in Lemma~\ref{lem2.6}.

By integrating the first equation in (\ref{e210}) both in space and in time, we see that
\begin{equation}\label{e42}
\int_{\Omega} u_{\varepsilon}(\cdot, T)-\int_{\Omega} u_{\eps 0}=\int_{0}^{T} \int_{\Omega} \lambda u_{\varepsilon}-\int_{0}^{T} \int_{\Omega} \mu u_{\varepsilon}^\kappa
\qquad \text{for all $\eps \in (0, 1)$ and $T \gt 0$}.
\end{equation}
As due to (\ref{e234}) we have 
\begin{equation*}
\int_{0}^{T} \int_{\Omega} \mu u^{\kappa} \leq \liminf _{\varepsilon=\varepsilon_{k} \sea 0}  \int_{0}^{T} \int_{\Omega}  \mu u_{\varepsilon}^{\kappa}
\end{equation*}
and because of \eqref{e229}, \eqref{eq:conv_init} and \eqref{e228},
taking the limes superior on both sides of (\ref{e42}) yields (\ref{e24}).

Moreover, for and $\varphi \in C_c^\infty(\Ombar \times [0, \infty))$,
we multiply the second equation in (\ref{e210}) by $\varphi$ and integrate by parts to obtain
\begin{equation*}
-\int^\infty_0\int_{\Omega}v_\varepsilon\varphi_t-\int_\Omega v_{\eps 0}\varphi(\cdot, 0)=-\int^\infty_0\int_{\Omega}\nabla v_\varepsilon\cdot\nabla\varphi-\int^\infty_0\int_{\Omega}v_\varepsilon\varphi+\int^\infty_0\int_{\Omega}u_\varepsilon\varphi
\qquad \text{for all $\eps \in (0, 1)$},
\end{equation*}
which according to \eqref{e231}, \eqref{eq:conv_init}, \eqref{e232} and \eqref{e228} implies (\ref{e26}).

Let us now proceed to prove \eqref{e25}.
For nonnegative $\varphi\in C_c^\infty(\ol{\Omega}\times[0,\infty))$, testing the first equation in (\ref{e210}) with $\frac{\varphi}{u_{\varepsilon}+1}$ leads to
\begin{equation}\label{e244}
\begin{aligned} I_{1}(\varepsilon)&\defs \int_{0}^{\infty} \int_{\Omega}\left|\nabla \ln \left(u_{\varepsilon}+1\right)\right|^{2} \varphi \\
&=-\int_{0}^{\infty} \int_{\Omega} \ln \left(u_{\varepsilon}+1\right) \varphi_{t}-\int_{\Omega} \ln \left(u_{0}+1\right) \varphi(\cdot, 0)
+\int_{0}^{\infty} \int_{\Omega} \nabla \ln \left(u_{\varepsilon}+1\right) \cdot \nabla \varphi \\
&\pe+\int_{0}^{\infty} \int_{\Omega} \frac{f_{\varepsilon}\left(u_{\varepsilon}\right)}{u_{\varepsilon}+1}\left(\nabla \ln \left(u_{\varepsilon}+1\right) \cdot \nabla v_{\varepsilon}\right) \varphi -\int_{0}^{\infty} \int_{\Omega} \frac{f_{\varepsilon}\left(u_{\varepsilon}\right)}{u_{\varepsilon}+1} \nabla v_{\varepsilon} \cdot \nabla \varphi \\
&\pe-\int_{0}^{\infty} \int_{\Omega}\frac{\lambda u_\varepsilon}{u_\varepsilon+1}\varphi+\int_{0}^{\infty} \int_{\Omega}\frac{\mu u_\varepsilon^\kappa}{u_\varepsilon+1}\varphi\\
&\sfed I_{2}(\varepsilon)+I_{3}(\varepsilon)+I_{4}(\varepsilon)+I_{5}(\varepsilon)+I_{6}(\varepsilon) +I_{7}(\varepsilon)+I_{8}(\varepsilon)
\qquad\text{for all $\eps \in (0, 1)$}.
\end{aligned}
\end{equation}
We choose $T>0$ large enough such that $\supp \varphi \in \Ombar \times [0, T]$.
Since $\ln \left(u_{\varepsilon}+1\right) \rightharpoonup \ln (u+1)$ in $L^2((0,T);W^{1,2}(\Omega))$ as $\varepsilon=\varepsilon_k\searrow0$ by (\ref{e230}), we have
\begin{equation}\label{e245}
I_2(\varepsilon)\to-\int_{0}^{\infty} \int_{\Omega} \ln (u+1) \varphi_{t}\ \ \text{and}\ \ I_4(\varepsilon)\to\int_{0}^{\infty} \int_{\Omega} \nabla \ln (u+1) \cdot \nabla \varphi
\qquad \text{as $\eps = \eps_k \sea 0$}
\end{equation}
and
\begin{equation}\label{e246}
\int_{0}^{\infty} \int_{\Omega}|\nabla \ln (u+1)|^{2} \varphi \leq \liminf _{\varepsilon=\varepsilon_{k} \sea 0} I_{1}(\varepsilon).
\end{equation}
Moreover, because $\nabla v_\varepsilon\to\nabla v$ in $L^2(\Omega\times(0,T))$ as $\varepsilon=\varepsilon_k\searrow0$ by Lemma \ref{lem2.9}, since $0\leq\frac{f_{\varepsilon}(u_{\varepsilon})}{u_{\varepsilon}+1}\leq1$ in $\Omega \times (0, T)$ for all $\varepsilon\in(0,1)$ and as $\frac{f_{\varepsilon}(u_{\varepsilon})}{u_{\varepsilon}+1}\to\frac{u}{u+1}$ a.e.\ in $\Omega\times(0,T)$ as $\varepsilon=\varepsilon_k\searrow0$ by \eqref{e228} and the definition of $f_\eps$, we have
\begin{align*}
        \left\| \frac{f_\eps(\ue)}{\ue+1} \nabla \ve - \frac{u}{u+1} \nabla v \right\|_{\leb2}
  &\le  \left\| \frac{f_\eps(\ue)}{\ue+1} ( \nabla \ve - \nabla v ) \right\|_{\leb2}
        + \left\| \left(\frac{f_\eps(\ue)}{\ue+1} - \frac{u}{u+1} \right) \nabla v \right\|_{\leb2}
   \ra  0
\end{align*}
as $\varepsilon=\varepsilon_k\searrow0$,
which together with \eqref{e230} implies
\begin{equation}
I_{5}(\varepsilon) \rightarrow -\int_{0}^{\infty} \int_{\Omega} \frac{u}{u+1}(\nabla \ln (u+1) \cdot \nabla v) \varphi \label{e46}
\qquad\text{as $\varepsilon=\varepsilon_k\searrow0$}.
\end{equation}
Noting that $\frac{\mu \ue^\kappa}{\ue+1} \ra \frac{\mu u^\kappa}{u+1}$ in $L^1(\Omega \times (0, T))$ as $\eps = \eps_k \sea 0$
by \eqref{e218}, \eqref{e228} and Vitali's theorem, we obtain by analogous arguments
\begin{align}
I_{6}(\varepsilon) \rightarrow \int_{0}^{\infty} \int_{\Omega} \frac{u}{u+1} \nabla v \cdot \nabla \varphi, \quad 
I_{7}(\varepsilon) \rightarrow -\int_{0}^{\infty} \int_{\Omega} \frac{\lambda u}{u+1}\varphi \quad \text{and} \quad
I_{8}(\varepsilon) \rightarrow \int_{0}^{\infty} \int_{\Omega} \frac{\mu u^\kappa}{u+1} \varphi \label{e49},
\end{align}
as $\varepsilon=\varepsilon_k\searrow0$.
Finally, (\ref{e25}) follows directly from~\eqref{e244}--\eqref{e49}.
\end{proof}

\small
\section*{Acknowledgments}
The first author has been supported by China Scholarship Council (No.\ 201906090124),  and in part by National Natural Science Foundation of China (Nos.\ 11671079, 11701290, 11601127 and 11171063), and the Natural Science Foundation of Jiangsu Province (No.\ BK20170896). The second author is partially supported by the German Academic Scholarship Foundation and by the Deutsche Forschungsgemeinschaft within the project \emph{Emergence of structures and advantages in cross-diffusion systems}, project number 411007140.

\footnotesize 

\newcommand{\noopsort}[1]{}

\end{document}